\newtheorem{thm}{Theorem}[section]
\newtheorem{cor}[thm]{Corollary}
\newtheorem{lem}[thm]{Lemma}
\newtheorem{prop}[thm]{Proposition}
\theoremstyle{definition}
\newtheorem{defn}[thm]{Definition}
\newtheorem{example}[thm]{Example}
\theoremstyle{remark}
\newtheorem{rem}[thm]{Remark}
\numberwithin{equation}{section}
\begin{document}
\title[Uniqueness sequences for multidimensional...]{Uniqueness sequences for multidimensional vector-valued Laplace transform}

\author{Marko Kosti\' c}
\address{Faculty of Technical Sciences,
University of Novi Sad,
Trg D. Obradovi\' ca 6, 21125 Novi Sad, Serbia}
\email{marco.s@verat.net}

{\renewcommand{\thefootnote}{} \footnote{2020 {\it Mathematics
Subject Classification.} 44A10, 44A30, 44A99.
\\ \text{  }  \ \    {\it Key words and phrases.} Multidimensional vector-valued Laplace transform, uniqueness sequences, subordination principles.}}

\begin{abstract}
In this research article, we consider the uniqueness sequences for multidimensional vector-valued Laplace transform. We establish the fundamental relationships between uniqueness sequences for one-dimensional Laplace transform and uniqueness sequences for multidimensional Laplace transform. We also provide several illustrative examples, open problems and useful observations in the above direction.
\end{abstract} 

\maketitle 

\section{Introduction and preliminaries}

It seems that the modern form of 
Laplace transform was probably used for the first time by L. Euler in 1744. Concerning the Laplace transform and its applications, we can warmly recommend reading the research monographs \cite{a43} by W. Arendt et al., \cite{debnathb} by
L. Debnath and D. Bhatta, \cite{dech} by G. Doetsch, \cite{knjigaho}-\cite{FKP}
by M. Kosti\' c, \cite{prus} by J. Pr\"uss, \cite{widder} by
D. V. Widder and \cite{x263}
by
T.-J. Xiao and J. Liang.

The double Laplace transform of scalar-valued functions was initially analyzed  by D. L. Bernstein \cite{bern0} (1939), J. C. Jaeger \cite{jager} (1939--1941) and L. Amerio \cite{amerio} (1940). In a series of our recent research studies, we have investigated the multidimensional Laplace transform of functions with values in  sequentially complete locally convex spaces and their applications to the abstract Volterra integro-differential inclusions with multiple variables (\cite{vm,glfc,mvlt}).

The uniqueness, inversion and approximations of vector-valued Laplace transform have been seriously investigated by B. B\"aumer, C. Mihai, F. Neubrander and their collaborators; see \cite{b1}-\cite{b5}, \cite{kim}, \cite{mihai} and the references quoted therein. The notion of a uniqueness sequence for the vector-valued Laplace transform has been essentially employed by B. B\"aumer in \cite{b2}, where the author has considered the regularized solutions to the backwards heat equation in the space $L^{2}[0,\pi];$ cf. also \cite[Example 2.8.1, Example 2.8.2]{knjigah} for some later contributions made. 

The main results concerning uniqueness sequences for Laplace transform were proved by Yu-Cheng Shen in 1947 (\cite{shen}) and C. Mihai in 2009 (\cite{mihai}); cf. also \cite[Theorem 1.11.1]{a43}, \cite[Lemma 1.2]{b5} and \cite[pp. 72--79]{dech}:

\begin{thm}\label{ibeer}
\begin{itemize}
\item[(i)] Suppose that $(\lambda_{k})_{k\in {\mathbb N}}$ is an infinite sequence without accumulation points such that $\Re \lambda_{k}\geq \delta>0,$ $k\in {\mathbb N}$ for some $\delta>0$, and
\begin{align}\label{ret}
\sum_{k=1}^{+\infty}\Biggl[ 1-\Biggl| \frac{  \lambda_{k}-1 }{  \lambda_{k}+1 }\Biggr|\Biggr]=+\infty.
\end{align}
Then $(\lambda_{k})_{k\in {\mathbb N}}$ is a uniqueness sequence for Laplace transform.
\item[(ii)] Suppose that $(\lambda_{k})_{k\in {\mathbb N}}$ is a complex sequence without accumulation points, $\Re \lambda_{k}\geq \delta>0,$ $k\in {\mathbb N}$ for some $\delta>0$, the sum in \eqref{ret} is finite and $|\arg(\lambda_{k})|\leq \theta $, $k\in {\mathbb N}$ for some $\theta \in (0,\pi/2).$ Then $(\lambda_{k})_{k\in {\mathbb N}}$ is not a uniqueness sequence for Laplace transform.
\item[(iii)] Suppose that $(\lambda_{k})_{k\in {\mathbb N}}$ is a complex sequence without accumulation points, $\Re \lambda_{k}\geq \delta>0,$ $k\in {\mathbb N}$ for some $\delta>0$ and $|\arg(\lambda_{k})|\leq \theta $, $k\in {\mathbb N}$ for some $\theta \in (0,\pi/2).$ Then $(\lambda_{k})_{k\in {\mathbb N}}$ is a uniqueness sequence for Laplace transform if and only if \eqref{ret} holds.
\end{itemize}
\end{thm}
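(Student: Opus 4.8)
The plan is to route the entire argument through the Cayley transform $w=\Phi(\lambda):=(\lambda-1)/(\lambda+1)$, which maps the right half-plane $\{\Re\lambda>0\}$ conformally onto the unit disc $\mathbb D$ and sends $(\lambda_k)$ to $w_k:=(\lambda_k-1)/(\lambda_k+1)\in\mathbb D$. The bridge between the parts is the elementary identity $1-|w_k|^2=4\Re\lambda_k/|\lambda_k+1|^2$, which gives $1-|w_k|\asymp\Re\lambda_k/|\lambda_k+1|^2\asymp\Re\lambda_k/(1+|\lambda_k|^2)$ (the last comparison because $|\lambda_k+1|^2/(1+|\lambda_k|^2)\in[1,2]$). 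Hence the divergence of \eqref{ret} is \emph{exactly} the failure of the Blaschke condition $\sum_k(1-|w_k|)<\infty$ for $(w_k)$ in $\mathbb D$. Once (i) and (ii) are in hand, (iii) is immediate: under the sector bound, (i) yields ``\eqref{ret} $\Rightarrow$ uniqueness'' and the contrapositive of (ii) yields ``uniqueness $\Rightarrow$ \eqref{ret}''.

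For (i) I would work inside the disc. Let $f$ be admissible with $\hat f(\lambda_k)=0$ for all $k$; then $F:=\hat f$ is holomorphic on a half-plane $\{\Re\lambda>\omega\}$ with $\omega<\delta$, tends to $0$ as $\Re\lambda\to+\infty$, and obeys a resolvent-type bound $|F(\lambda)|\le C/(\Re\lambda-\omega)$. Transporting $F$ by the Cayley map of $\{\Re\lambda>\omega\}$ onto $\mathbb D$ (whose Blaschke sum is comparable to \eqref{ret} since $\Re\lambda_k\ge\delta>\omega$) produces a holomorphic $G$ on $\mathbb D$ with $G(w_k)=0$, and the crux is to check that $G$ belongs to the Nevanlinna class of functions of bounded type, i.e. $\sup_{0<r<1}\int_{-\pi}^{\pi}\log^{+}|G(re^{i\varphi})|\,d\varphi<\infty$; the decay and the resolvent bound on $F$, pushed through $\Phi$, are precisely what guarantees this, while the absence of accumulation points together with $\Re\lambda_k\ge\delta$ confines the $w_k$ to $\mathbb D$ with possible accumulation only at the boundary point $w=1=\Phi(\infty)$. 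The classical Blaschke--Jensen theorem then says that a function of bounded type on $\mathbb D$ whose zeros violate the Blaschke condition vanishes identically; since \eqref{ret} makes $\sum(1-|w_k|)=\infty$, we get $G\equiv0$, hence $F\equiv0$, and injectivity of the Laplace transform forces $f=0$. The main effort here is the bounded-type verification, not the Blaschke step.

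For (ii) the task is reversed: assuming $\sum_k(1-|w_k|)<\infty$, I must \emph{exhibit} a nonzero admissible $f$ killed by the whole sequence. Because the Blaschke condition now holds, the Blaschke product $B(w)=\prod_k\frac{|w_k|}{w_k}\cdot\frac{w_k-w}{1-\overline{w_k}w}$ converges to a nonzero $H^\infty(\mathbb D)$ function vanishing exactly on $(w_k)$; pulling back gives a bounded holomorphic $\tilde B=B\circ\Phi$ on $\{\Re\lambda>0\}$ with $\tilde B(\lambda_k)=0$. Multiplying by the fixed zero-free factor $(\lambda+1)^{-1}$, which lies in $H^2(\mathbb{C}_{+})$, yields $F:=\tilde B\cdot(\lambda+1)^{-1}\in H^2(\mathbb{C}_{+})$ (indeed $\int_{\mathbb R}|F(x+iy)|^2\,dy\le\pi$ uniformly in $x>0$), which is not identically zero and vanishes on $(\lambda_k)$; the Paley--Wiener theorem then returns $f=\mathcal L^{-1}F\in L^2(0,\infty)\setminus\{0\}$ with $\hat f(\lambda_k)=0$, so $(\lambda_k)$ is not a uniqueness sequence. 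The sector bound $|\arg\lambda_k|\le\theta<\pi/2$ enters to align the statement with the classical M\"untz--Sz\'asz formulation and to keep the recovered $f$ inside the admissible class: it forces $\Re\lambda_k\asymp|\lambda_k|$, so that \eqref{ret} is equivalent to the M\"untz-type series $\sum_k 1/|\lambda_k|$, and it makes the zeros approach the boundary only within a Stolz angle, since $1-w_k=2/(\lambda_k+1)$ then stays in a fixed sector.

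I expect the construction in (ii)---and in particular the bookkeeping that certifies the recovered $f$ as a genuine, nonzero member of the admissible class under the stated hypotheses---to be the main obstacle; part (i) reduces, after the bounded-type check, to a single invocation of the Blaschke theorem, and (iii) is then a purely formal consequence of combining the two.
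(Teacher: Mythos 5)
First, a point of reference: the paper itself contains \emph{no} proof of Theorem \ref{ibeer} --- it is quoted from Shen, Mihai, Arendt et al.\ and Doetsch, with only the remark that the proof ``leans heavily on the use of Jensen's formula.'' So the comparison is with that classical Jensen/Blaschke route, which is exactly the route you take. Your part (ii) is essentially correct as outlined: the Blaschke product pulled back to the half-plane, multiplied by $(\lambda+1)^{-1}$, lies in $H^{2}(\mathbb{C}_{+})$, Paley--Wiener returns a nonzero $f\in L^{2}(0,\infty)$, and the admissibility check is easy ($f\in L^{2}(0,\infty)$ forces $\operatorname{abs}(f)\le 0<\delta\le\Re\lambda_{k}$); note that your construction never actually uses the sector hypothesis $|\arg\lambda_{k}|\le\theta$, which under the uniform bound $\Re\lambda_{k}\ge\delta>0$ is indeed not needed for this direction. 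Part (iii) is then formal, as you say.

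The genuine gap is in part (i), at precisely the step you flag as the crux. The claimed mechanism --- that the resolvent-type bound $|F(\lambda)|\le C/(\Re\lambda-\omega)$, pushed through the Cayley map, guarantees that $G$ is of bounded type --- is false. If $\Phi_{\omega}^{-1}$ maps $\mathbb{D}$ onto $\{\Re\lambda>\omega\}$, then $\Re\bigl(\Phi_{\omega}^{-1}(w)\bigr)-\omega=(1-|w|^{2})/|1-w|^{2}$, so the resolvent bound transports only to $|G(w)|\le C'/(1-|w|)$. This growth class (Korenblum's $A^{-1}$, which contains the Bergman space $A^{2}$ by the standard pointwise estimate) is \emph{not} contained in the Nevanlinna class, and by Horowitz's theorem on Bergman zero sets there exist functions with exactly this growth whose zeros violate the Blaschke condition; hence no conclusion $G\equiv 0$ follows from divergence of $\sum_{k}(1-|w_{k}|)$ at this level of generality. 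There is also a prior inaccuracy: the resolvent bound is not valid for $\hat f$ itself when the Laplace integral converges only improperly (it acquires an extra factor $|\lambda|$); one must first pass to the antiderivative $G(t)=\int_{0}^{t}f(s)\,ds$, for which $\hat G(\lambda)=\hat f(\lambda)/\lambda$ and the bound does hold --- this is exactly the device the paper uses in the proof of Proposition \ref{gade}(ii).

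The correct classical mechanism exploits the uniform gap rather than a growth condition on all of $\{\Re\lambda>\omega\}$: choosing $\omega<\omega''<\delta$, the function $\hat G$ is genuinely \emph{bounded} on the strictly smaller half-plane $\{\Re\lambda>\omega''\}$, its Cayley transport is in $H^{\infty}(\mathbb{D})$, and Jensen's formula applies there; the Blaschke sum relative to $\{\Re\lambda>\omega''\}$ is still divergent because $\Re\lambda_{k}-\omega''\ge(1-\omega''/\delta)\,\Re\lambda_{k}$, so it is comparable to \eqref{ret}. Be aware, finally, that the inequality $\omega<\delta$ you assume is doing real work and is not supplied by the paper's Definition of a uniqueness sequence, which only requires $\operatorname{abs}(f)<\Re\lambda_{k}$ pointwise, hence only $\operatorname{abs}(f)\le\inf_{k}\Re\lambda_{k}$: for a sequence such as $\lambda_{k}=1+2^{-k}+i\log k$, the series \eqref{ret} diverges while the Blaschke condition relative to $\{\Re\lambda>1\}$ holds, so the half-plane shift is obstructed when $\operatorname{abs}(f)=1$ is permitted; the classical formulations (e.g.\ for bounded $f$ with $\Re\lambda_{k}\ge\delta>0$) build in this gap, and your write-up should state it explicitly.
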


In this research article, we analyze the uniqueness sequences for multidimensional vector-valued Laplace transform. First of all, we would like to note that the proof of Theorem \ref{ibeer} leans heavily on the use of Jensen's formula, which  asserts that the zeroes $(\alpha_{k})_{k\in {\mathbb N}}$ of a bounded analytic function $h: \{z\in {\mathbb C} : |z|<1\} \rightarrow {\mathbb C},$ must satisfy the Blaschke condition $\sum_{k=1}^{+\infty}(1-|\alpha_{k}|)<+\infty .$ Unfortunately, there is no satisfactory extension of Jensen's formula to the functions defined in polydiscs and it would be very difficult to provide an analogue of Theorem \ref{ibeer} in the higher-dimensional setting (cf. \cite[pp. 300--311]{rudin1} and \cite[pp. 61--64]{rudin2} for more details). 

First of all, we recall the basic definitions and results about multidimensional vector-valued Laplace transform in Subsection \ref{suba1}. The notion of a uniqueness sequence for multidimensional vector-valued Laplace transform is introduced in Definition \ref{mila}, which is given at the very beginning of Section \ref{us}. The main aim of Theorem \ref{gade} is to extend the statements of \cite[Lemma 1.7.1, Proposition 1.7.2]{a43} to the multi-dimensional setting as well as to provide some concrete examples of uniqueness sequences for multidimensional Laplace transform. If $\Re \lambda_{k}^{j} >0$ for all $k\in {\mathbb N}$ and $j\in {\mathbb N}_{n},$ where $ {\mathbb N}_{n}:= \{1,...,n\},$ then Proposition \ref{total} claims that the set of functions
$\{\exp(-\lambda_{k}^{1}t_{1}-...-\lambda_{k}^{n}t_{n}) : k\in {\mathbb N}\}$ is total in $L^{1}([0,+\infty)^{n})$ if and only if the sequence $((\lambda_{k}^{1},...,\lambda_{k}^{n}))_{k\in {\mathbb N}}$ is a uniqueness sequence for the multidimensional Laplace transform. In Theorem \ref{tor} and Proposition \ref{zxc}, we elucidate a fundamental relationship between the uniqueness sequences for one-dimensional Laplace transform and the uniqueness sequences for multidimensional Laplace transform; cf. also Example \ref{or} and Example \ref{oro}. We prove that any uniqueness sequence for multidimensional Laplace transform which belongs to the set $({\mathbb C}_{+})^{n},$ where ${\mathbb C}_{+}:=\{z\in {\mathbb C} :  \Re z>0\},$ has to be a subsequence of $((\lambda_{k_{1}}^{1},...,\lambda_{k_{n}}^{n}))_{  (k_{1},...,k_{n})\in {\mathbb N}^{n}}$, where $(\lambda_{k}^{1})_{k\in {\mathbb N}},...,$ and $ (\lambda_{k}^{n})_{k\in {\mathbb N}}$ are uniqueness sequences for one-dimensional Laplace transform as well as that the sequence $((\lambda_{k_{1}}^{1},...,\lambda_{k_{n}}^{n}))_{  (k_{1},...,k_{n})\in {\mathbb N}^{n}}$ itself is a uniqueness sequence for multidimensional Laplace transform. The following problem is essentially important in our study:\vspace{0.1cm} 

\noindent {\bf Problem 1.} Suppose that 
$(\lambda_{k}^{1})_{k\in {\mathbb N}},...,$ and $ (\lambda_{k}^{n})_{k\in {\mathbb N}}$ are uniqueness sequences for Laplace transform, and  $\Re \lambda_{k}^{j} >0$ for all $k\in {\mathbb N}$ and $j\in {\mathbb N}_{n}$. Could we clarify some non-trivial conditions ensuring that a proper subsequence of $((\lambda_{k_{1}}^{1},...,\lambda_{k_{n}}^{n}))_{  (k_{1},...,k_{n})\in {\mathbb N}^{n}}$ is a uniqueness sequence for multidimensional Laplace transform?\vspace{0.1cm}

The M\"untz approximation theorem states that the sequence of functions $\{t^{\lambda_{k}} : k\in {\mathbb N}_{0}\},$ where $0=\lambda_{0}<\lambda_{1}<...<\lambda_{k}<...,$ is total in $C([0,1])$ if and only if $\sum_{k=1}^{+\infty}\lambda_{k}^{-1}=+\infty.$ The analysis of multidimensional M\"untz approximation type theorems is rather non-trivial and we can recommend reading the research articles \cite{bloom} by T. Bloom, \cite{heller} by S. Hellerstein, \cite{kroo} by  A. Kro\'o, \cite{ogawa} by S. Ogawa, K. Kitahara, \cite{ronkin} by L. I. Ronkin and references cited therein for more details about this issue.
In Theorem \ref{kunja}, we will provide a partial answer to Problem 1 by using a multivariate M\"untz theorem established by  L. I. Ronkin in \cite[Theorem 1]{ronkin}. 

Subsection \ref{subasub} investigates the subordination principles for uniqueness sequences; our main result in this direction are Theorem \ref{ressuba} and Corollary \ref{ojha}.
Finally, we would like to say that it would be very tempting to find some relevant applications of uniqueness sequences for multidimensional Laplace transform to abstract Volterra integro-differential inclusions with multiple variables. 

\subsection{Multidimensional vector-valued Laplace transform}\label{suba1} 

Let $n\in {\mathbb N},$ let $X$ be a non-trivial Hausdorff sequentially complete locally convex space\index{sequentially complete locally convex space!Hausdorff} over the field of complex numbers (SCLCS), and let
the abbreviation $\circledast$ designate the fundamental system of seminorms\index{system of seminorms} which defines the topology of $X.$  For further information concerning the integration of functions with values in SCLCSs, we refer the reader to \cite{FKP}.

We recall the following notion from \cite{mvlt}:

\begin{defn}\label{svi}
Suppose that $X$ is an SCLCS, $f: [0,+\infty)^{n}\rightarrow X$ is a locally integrable function and $(\lambda_{1},...,\lambda_{n})\in {\mathbb C}^{n}$. If 
\begin{align*} 
F\bigl(\lambda_{1},...,\lambda_{n}\bigr)&:=\lim_{t_{1}\rightarrow +\infty;...;t_{n}\rightarrow +\infty}\int^{t_{1}}_{0}...\int^{t_{n}}_{0}e^{-\lambda_{1}s_{1}-...-\lambda_{n}s_{n}}f\bigl(s_{1},...,s_{n}\bigr)\, ds_{1}\, ...\, ds_{n}
\\& :=\int^{+\infty}_{0}...\int^{+\infty}_{0}e^{-\lambda_{1}t_{1}-...-\lambda_{n}t_{n}}f\bigl(t_{1},...,t_{n}\bigr)\, dt_{1}\, ...\, dt_{n},
\end{align*}
exists in topology of $X,$ 
then we say that the Laplace integral $({\mathcal Lf})(\lambda_{1},...,\lambda_{n}):=\hat{f}(\lambda_{1},...,\lambda_{n}):=F(\lambda_{1},...,\lambda_{n})$ exists.
We define the region of convergence of Laplace integral $
\Omega(f)$ by
$
\Omega(f):=\{ (\lambda_{1},...,\lambda_{n})\in {\mathbb C}^{n} : F(\lambda_{1},...,\lambda_{n})\mbox{ exists}\}.
$
Furthermore, if
for each seminorm $p\in \circledast$ we have
\begin{align*}
\int^{+\infty}_{0}...\int^{+\infty}_{0}p\Bigl(e^{-\lambda_{1}t_{1}-...-\lambda_{n}t_{n}}f\bigl(t_{1},...,t_{n}\bigr)\Bigr)\, dt_{1}\, ...\, dt_{n}<+\infty,
\end{align*}
then we say that the Laplace integral $F(\lambda_{1},...,\lambda_{n})$
converges absolutely. We define the region of absolute convergence of Laplace integral $
\Omega_{abs}(f)$ by
$
\Omega_{abs}(f):=\{ (\lambda_{1},...,\lambda_{n})\in {\mathbb C}^{n} : F(\lambda_{1},...,\lambda_{n})\mbox{ converges absolutely}\}.
$
Finally, we define $\Omega_{b}(f)$ as the set of all tuples $(\lambda_{1},...\lambda_{n})\in {\mathbb C}^{n}$ such that the set{\small
\begin{align*} 
\Biggl\{
\int^{t_{1}}_{0}\int^{t_{2}}_{0}...\int^{t_{n}}_{0}e^{-\lambda_{1}s_{1}-\lambda_{2}s_{2}-...-\lambda_{n}s_{n}}f\bigl( s_{1},s_{2},...,s_{n}\bigr)\, ds_{1} \, ds_{2}...\, ds_{n} :  t_{1}\geq 0,...,t_{n}\geq 0\Biggr\}
\end{align*}}
is bounded in $X.$
\end{defn}

A locally integrable function $f: [0,+\infty)^{n}\rightarrow X$ is said to be Laplace transformable if there exist real numbers $\omega_{i}\in {\mathbb R}$ ($1\leq i\leq n$) such that $ \{\lambda_{1}\in {\mathbb C} : \Re \lambda_{1}>\omega_{1}\} \times ... \times \{\lambda_{n}\in {\mathbb C} : \Re \lambda_{n}>\omega_{n}\} \subseteq \Omega(f).$ If $f: [0,+\infty)^{n}\rightarrow X$ is locally integrable, $\emptyset \neq \Omega \subseteq \Omega(f) \cap \Omega_{b}(f)$ is open and $(\lambda_{1},...,\lambda_{n})\in \Omega,$ then the mapping $F: \Omega \rightarrow X$ is holomorphic;
for the basic information concerning holomorphic vector-valued functions of several variables, we refer the reader to the research article \cite{kruse} by K. Kruse and list of references quoted in \cite{mvlt}.  

If $u\in L_{loc}^{1}([0,\infty)^{n})$ and $a\in L_{loc}^{1}([0,\infty)^{n})$, then we define 
\begin{align*}                    
\bigl(a\ast_{0}u\bigr)(t) := \int^{t_{1}}_{0}\cdot ....\cdot \int^{t_{n}}_{0}a\bigl(t_{1}-s_{1},...,t_{n}-s_{n}\bigr)u\bigl(s_{1},...,s_{n}\bigr)\, ds_{1} ...\, ds_{n},
\end{align*}
for any $ t= (t_{1},...,t_{n}\bigr)\in [0,+\infty)^{n}.$ The following generalization of finite convolution product $\ast_{0}$ has recently been introduced and analyzed in \cite{new}:
Suppose that $1\leq l\leq n$, $1\leq j\leq {n\choose l}$ and $D_{l,j}=\{j_{1},...,j_{l}\}$ is a fixed subset of ${\mathbb N}_{n}$, where $1\leq j_{1}<...<j_{l}\leq n.$ If $a_{l,j}\in L_{loc}^{1}([0,+\infty)^{l})$, then we define 
\begin{align} \notag &
\Bigl( a_{l,j}\ast_{0}^{l,j}u\Bigr)\bigl(t_{1},...,t_{n}\bigr):=\int^{t_{j_{1}}}_{0}...\int^{t_{j_{l}}}_{0}a_{l,j}\bigl( t_{j_{1}}-s_{j_{1}},..., t_{j_{l}}-s_{j_{l}}\bigr)
\\  \label{mire} & \times u\Bigl(t_{1},...,t_{j_{1}-1},s_{j_{1}},t_{j_{1}+1},...,t_{j_{2}-1},s_{j_{2}},t_{j_{2}+1},...,t_{j_{l}-1}, s_{j_{l}},t_{j_{l}+1},...,t_{n}\Bigr)\, ds_{j_{1}}...\, ds_{j_{l}},
\end{align}
for any $(t_{1},...,t_{n}) \in [0,+\infty)^{n}.$ 
 In Fr\' echet spaces, the convolution $(a_{l,j}\ast_{0}^{l,j}u)(\cdot)$ exists if $u\in L_{loc}^{1}([0,+\infty)^{n}:X),$ while in general SCLCSs the convolution $(a_{l,j}\ast_{0}^{l,j}u)(\cdot)$ exists if, in addition to the above, $a_{l,j}\in C([0,+\infty)^{l} )$ or $u\in C([0,+\infty)^{n} : X );$ in the first case, we have $a_{l,j}\ast_{0}^{l,j}u\in L_{loc}^{1}([0,+\infty)^{n}:X)$, while, in the second case, we have $a_{l,j}\ast_{0}^{l,j}u\in C([0,+\infty)^{n}:X).$

We need the following result from \cite{new}, as well:

\begin{lem}\label{nova}
Suppose that $0\leq l\leq n,$ $1\leq j\leq {n\choose l}$, $a_{l,j}\in L_{loc}^1([0,+\infty)^{l})$, $u\in L_{loc}^1([0,+\infty)^{n}:X)$ and  $\Omega =\{\lambda \in {\mathbb C} : \Re \lambda>\omega_{1}\} \times ... \times \{\lambda \in {\mathbb C} : \Re \lambda>\omega_{n}\}\subseteq   \Omega_{abs}(u).$
Then
the following holds ($\lambda=( \lambda_{1},...,\lambda_{n})$):
\item[(i)] Let $X$ be a Fr\' echet space. Then $(a_{l,j}\ast_{0}^{l,j}u)(\cdot)\in L_{loc}^{1}([0,+\infty)^{n}:X)$ and  
\begin{align*}
\Bigl({\mathcal L}\bigl(a_{l,j}\ast_{0}^{l,j}u\bigr)\Bigr)( \lambda)=\widetilde{a_{l,j}}\bigl( \lambda_{j_{1}},...,  \lambda_{j_{l}} \bigr) \cdot \tilde{u}( \lambda),
\end{align*}
provided that \emph{(a)-(b)} hold, where:
\begin{itemize}
\item[(a)] 
$\Omega\subseteq \Omega_{abs}(a_{l,j}\ast_{0}^{l,j}u),$ and
\item[(b)] For every $t_{1},...,t_{j_{1}-1},t_{j_{1}+1},t_{j_{2}-1},t_{j_{2}+1},...,t_{j_{l}-1}, t_{j_{l}+1},...,t_{n}$ we have
\begin{align*} &
\bigl\{ \lambda_{j_{1}}\in {\mathbb C} : \Re \lambda_{j_{1}}>\omega_{j_{1}}\bigr\} \times ... \times \bigl\{ \lambda_{j_{l}}\in {\mathbb C} : \Re \lambda_{j_{l}}>\omega_{j_{l}}\bigr\}\subseteq
\\&  \Omega_{abs}\bigl(a_{l,j}\bigr)  \cap \Omega_{abs}\Bigl(u\bigl(t_{1},...,t_{j_{1}-1},\cdot,t_{j_{1}+1},...,t_{j_{2}-1},\cdot,t_{j_{2}+1},...,t_{j_{l}-1}, \cdot,t_{j_{l}+1},...,t_{n}\bigr)\Bigr).
\end{align*}
\end{itemize}
\end{lem}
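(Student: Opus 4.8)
The plan is to reduce the identity to the classical convolution theorem for the Laplace transform by means of Fubini's theorem together with a translation in the convolved variables. First I would note that the local integrability of $a_{l,j}\ast_{0}^{l,j}u$ on $[0,+\infty)^{n}$ for Fr\'echet spaces has already been recorded in the discussion preceding the lemma, so only the transform identity requires proof. Abbreviating by $u(\cdots,s_{j_{m}},\cdots)$ the function $u$ with its coordinates indexed by $D_{l,j}=\{j_{1},\dots,j_{l}\}$ replaced by $s_{j_{1}},\dots,s_{j_{l}}$ and the remaining coordinates left equal to the corresponding $t_{i}$, I would substitute the defining formula \eqref{mire} into
\[
\bigl({\mathcal L}(a_{l,j}\ast_{0}^{l,j}u)\bigr)(\lambda)=\int_{0}^{+\infty}\!\!\!\cdots\!\int_{0}^{+\infty}e^{-\lambda_{1}t_{1}-\cdots-\lambda_{n}t_{n}}\bigl(a_{l,j}\ast_{0}^{l,j}u\bigr)(t_{1},\dots,t_{n})\,dt_{1}\cdots dt_{n},
\]
obtaining an $(n+l)$-fold iterated integral over the variables $t_{i}$ ($i\in{\mathbb N}_{n}$) and $s_{j_{m}}$ ($1\le m\le l$), the latter constrained by $0\le s_{j_{m}}\le t_{j_{m}}$.

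The crux is the interchange of the order of integration, and this is the step I expect to be the main obstacle. Here I would invoke Fubini's theorem in the Fr\'echet-space setting, applied seminorm by seminorm: for each $p\in\circledast$ one majorizes the $p$-seminorm of the integrand and uses the hypotheses to deduce absolute integrability of the full iterated integral, whereupon the scalar Fubini theorem applied to the real-valued function $p(\cdots)$, combined with the completeness of $X$ and the countability of $\circledast$, legitimizes the exchange for the $X$-valued integral. Condition (a) ensures that the left-hand Laplace integral converges absolutely on $\Omega$; condition (b) ensures that, for the frozen choices of the untouched coordinates, the partial Laplace integrals of $a_{l,j}$ and of the corresponding section of $u$ converge absolutely in the half-planes $\Re\lambda_{j_{m}}>\omega_{j_{m}}$. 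Together with the standing assumption $\Omega\subseteq\Omega_{abs}(u)$, these hypotheses furnish a finite majorant for the iterated $p$-integral and thereby justify the interchange.

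Once the integrations are decoupled, I would carry out, for each $m\in\{1,\dots,l\}$, the translation $\tau_{j_{m}}:=t_{j_{m}}-s_{j_{m}}$ in the $j_{m}$-th convolved variable. Under this substitution the constraint $0\le s_{j_{m}}\le t_{j_{m}}$ turns into $\tau_{j_{m}}\ge 0$ and $s_{j_{m}}\ge 0$ with the two variables now independent, and the exponential splits as $e^{-\lambda_{j_{m}}t_{j_{m}}}=e^{-\lambda_{j_{m}}\tau_{j_{m}}}e^{-\lambda_{j_{m}}s_{j_{m}}}$. Collecting the integration in $\tau_{j_{1}},\dots,\tau_{j_{l}}$ against $a_{l,j}(\tau_{j_{1}},\dots,\tau_{j_{l}})$ reproduces exactly $\widetilde{a_{l,j}}(\lambda_{j_{1}},\dots,\lambda_{j_{l}})$, while the remaining integration---over the variables $s_{j_{m}}$, which now occupy the slots $j_{1},\dots,j_{l}$ of $u$, and over the untouched variables $t_{i}$ for $i\notin D_{l,j}$, weighted by the corresponding exponentials $e^{-\lambda_{j_{m}}s_{j_{m}}}$ and $e^{-\lambda_{i}t_{i}}$---reassembles precisely $\tilde{u}(\lambda)$ after renaming the integration variables. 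This yields the asserted factorization $\widetilde{a_{l,j}}(\lambda_{j_{1}},\dots,\lambda_{j_{l}})\cdot\tilde{u}(\lambda)$ and finishes the proof; the degenerate case $l=0$, where $a_{0,j}$ is a scalar and the convolution reduces to scalar multiplication, is immediate.
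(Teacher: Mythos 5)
First, a point of comparison: the paper does not prove Lemma \ref{nova} at all --- it is imported from \cite{new} (the paper merely states that it needs this result from that reference), so there is no in-paper argument to measure your proposal against; it must stand on its own. On its own terms it is essentially correct, and it is the natural argument: insert \eqref{mire} into the transform, interchange the integrations, and decouple via the translation $\tau_{j_{m}}:=t_{j_{m}}-s_{j_{m}}$. Two things that you leave implicit should be spelled out. First, the transform of Definition \ref{svi} is an improper joint-limit integral, not a priori a Lebesgue integral over the orthant; it is exactly absolute convergence that identifies the three transforms occurring in the identity with genuine Lebesgue integrals so that Tonelli/Fubini can be applied: condition (a) does this for $\bigl({\mathcal L}(a_{l,j}\ast_{0}^{l,j}u)\bigr)(\lambda)$, the $\Omega_{abs}(a_{l,j})$ part of (b) does it for $\widetilde{a_{l,j}}(\lambda_{j_{1}},...,\lambda_{j_{l}})$, and the standing hypothesis $\Omega\subseteq\Omega_{abs}(u)$ does it for $\tilde{u}(\lambda)$. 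Second, the finite majorant justifying the interchange comes from Tonelli alone: for each seminorm $p\in\circledast$, using $p\bigl(\int\cdot\bigr)\leq\int p(\cdot)$ and the same substitution $\tau_{j_{m}}=t_{j_{m}}-s_{j_{m}}$, the $p$-integral of the full $(n+l)$-fold integrand factors as the Laplace integral of $|a_{l,j}|$ at $(\Re\lambda_{j_{1}},...,\Re\lambda_{j_{l}})$ times the Laplace integral of $p(u(\cdot))$ at $(\Re\lambda_{1},...,\Re\lambda_{n})$, both finite by (b) and by $\Omega\subseteq\Omega_{abs}(u)$, respectively. Note that on your route the clause of (b) concerning the sections $u(t_{1},...,\cdot,...,t_{n})$ is never actually used; it would be needed if one instead integrated the convolved variables first, for frozen values of the remaining coordinates, working directly with the iterated-limit definition --- which is presumably how \cite{new} proceeds. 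With these identifications made explicit, your proof is complete; your handling of the Fr\'echet setting (countably many seminorms, seminorm-wise application of the scalar Fubini theorem) and of the degenerate case $l=0$ is fine.
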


\section{Uniqueness sequences}\label{us}

A sequence $(\lambda_{k})_{k\in {\mathbb N}}$ of complex numbers is said to be a uniqueness sequence for the Laplace transform if for any locally integrable function $f : [0,+\infty) \rightarrow X$ such that $\emph{abs}(f)<\Re \lambda_{k}$ and $\hat{f}(\lambda_{k})=0$ for all $k\in {\mathbb N}$, we have $f=0$ a.e. (cf. \cite[Remark 1.7.4]{a43} and \cite[pp. 57--58]{FKP} for the notion). Since we have assumed that the pivot space $X$ is non-trivial, we can simply prove with the help of Hanh-Banach theorem that the notion of uniqueness sequence for the Laplace transform
is meaningful and does not depend on the choice of space $X$. For example, the sequence $-i,\pm 2i,\pm 3i,...$ is not a uniqueness sequence for the Laplace transform since it annulates the Laplace transform of function $f(\cdot)$ given by $f(t):=e^{it},$ $0\leq t\leq 2\pi$ and $f(t):=0,$ $t>2\pi;$ see \cite[Bemerkungen, p. 76]{dech}.

Let us also observe that, in the above definition, we have assumed that $\emph{abs}(f)<\Re \lambda_{k}$ for all $k\in {\mathbb N}$ but not that $\hat{f}(\lambda_{k})$ exists for all $k\in {\mathbb N}$. In order to stay consistent with the notion introduced in the one-dimensional setting, we would like to propose the following definition:

\begin{defn}\label{mila}
A sequence $((\lambda_{k}^{1},...,\lambda_{k}^{n}))_{k\in {\mathbb N}}$ in ${\mathbb C}^{n}$ is said to be a uniqueness sequence for the multidimensional Laplace transform (in ${\mathbb R}^{n}$) if for any locally integrable function $f : [0,+\infty)^{n} \rightarrow X$ such that there exist numbers $\omega_{1} \in [-\infty,+\infty),...,$ $ \omega_{n}\in [-\infty,+\infty)$ with $\Omega:=\{\lambda_{1}\in {\mathbb C} : \Re \lambda_{1}>\omega_{1}\} \times ... \times \{\lambda_{n}\in {\mathbb C} : \Re \lambda_{n}>\omega_{n}\} \subseteq \Omega_{b}(f) \cap \Omega(f)$ and $\omega_{j}<\Re \lambda_{k}^{j}$ for all $k\in {\mathbb N}$ and $j\in {\mathbb N}_{n}$, the assumption $\hat{f}(\lambda_{k}^{1},...,\lambda_{k}^{n})=0$ for all $k\in {\mathbb N}$ implies $f=0$ a.e.
\end{defn}

As in the one-dimensional setting, the notion introduced in Definition \ref{mila} does not depend on the choice of space $X$; in the remainder of this paper, we will therefore assume that $X={\mathbb C}$. It can be simply shown that a finite sequence  in ${\mathbb C}^{n}$ cannot be a uniqueness sequence for the multidimensional Laplace transform as well as that, for every two sequences $(z_{k})_{k\in {\mathbb N}}$ and $(z_{k}')_{k\in {\mathbb N}}$ in ${\mathbb C}^{n}$, the assumptions $(z_{k})_{k\in {\mathbb N}}$ is a uniqueness sequence for the multidimensional Laplace transform and $\{z_{k} : k\in {\mathbb N} \} \subseteq \{z_{k}' : k\in {\mathbb N}\},$ imply that $(z_{k}')_{k\in {\mathbb N}}$ is a uniqueness sequence for the multidimensional Laplace transform; furthermore, any sequence obtained by dropping finitely many elements from a uniqueness sequence is likewise a uniqueness sequence for the multidimensional Laplace transform. In Definition \ref{mila}, we can equivalently assume that the region $\Omega$ belongs to the set $\Omega_{b}(f);$ cf.\cite[Theorem 2.7]{mvlt}.

In the sequel, we will analyze the uniqueness sequences 
$((\lambda_{k}^{1},...,\lambda_{k}^{n}))_{k\in {\mathbb N}}$ which satisfy that there exist real numbers $\omega_{1},...,\omega_{n}$ such that $ \Re \lambda_{k}^{j}>\omega_{j}$ for all $k\in {\mathbb N}$ and $j\in {\mathbb N}_{n}. $
If this is the case, then the following result enables us to reduce our further investigation to the case in which $\Re \lambda_{k}^{j}>0$ ($\Re \lambda_{k}^{j}\geq \delta$ for some real number $\delta>0$) for all $k\in {\mathbb N}$ and $j\in {\mathbb N}_{n}$; the proof is easy and can be left to the interested readers:

\begin{prop}\label{nmt}
Suppose that $(z_{1},...,z_{n}) \in {\mathbb C}^{n}$ and $((\lambda_{k}^{1},...,\lambda_{k}^{n}))_{k\in {\mathbb N}}$ is a sequence in ${\mathbb C}^{n}.$ Then $((\lambda_{k}^{1},...,\lambda_{k}^{n}))_{k\in {\mathbb N}}$ is a uniqueness sequence for the multidimensional Laplace transform if and only if $((\lambda_{k}^{1}+z_{1},...,\lambda_{k}^{n}+z_{n}))_{k\in {\mathbb N}}$ is a uniqueness sequence for the multidimensional Laplace transform.
\end{prop}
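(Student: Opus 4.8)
The plan is to exploit the elementary modulation identity for the Laplace transform: multiplying $f$ by an exponential merely shifts the transform variable. Given a locally integrable $f:[0,+\infty)^{n}\to X$, I would introduce
\[
g(t_{1},\ldots,t_{n}):=e^{-z_{1}t_{1}-\cdots-z_{n}t_{n}}f(t_{1},\ldots,t_{n}),
\]
which is again locally integrable, being the product of $f$ with a continuous, nowhere-vanishing function. Because the asserted equivalence is symmetric under replacing $(z_{1},\ldots,z_{n})$ by $(-z_{1},\ldots,-z_{n})$ — this sends $\lambda_{k}^{j}+z_{j}$ back to $\lambda_{k}^{j}$ — it suffices to prove a single implication: if $((\lambda_{k}^{1},\ldots,\lambda_{k}^{n}))_{k\in{\mathbb N}}$ is a uniqueness sequence, then so is $((\lambda_{k}^{1}+z_{1},\ldots,\lambda_{k}^{n}+z_{n}))_{k\in{\mathbb N}}$.

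The core of the argument is the bookkeeping for the regions attached to $g$. Writing $\lambda=(\lambda_{1},\ldots,\lambda_{n})$ and $z=(z_{1},\ldots,z_{n})$, every partial integral satisfies
\begin{align*}
&\int^{t_{1}}_{0}\cdots\int^{t_{n}}_{0}e^{-\lambda_{1}s_{1}-\cdots-\lambda_{n}s_{n}}g(s_{1},\ldots,s_{n})\,ds_{1}\cdots ds_{n}
\\& =\int^{t_{1}}_{0}\cdots\int^{t_{n}}_{0}e^{-(\lambda_{1}+z_{1})s_{1}-\cdots-(\lambda_{n}+z_{n})s_{n}}f(s_{1},\ldots,s_{n})\,ds_{1}\cdots ds_{n},
\end{align*}
so the net of partial integrals of $g$ at $\lambda$ coincides with that of $f$ at $\lambda+z$. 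Reading off existence of the limit and boundedness of the net from this identity yields
\[
\lambda\in\Omega(g)\iff\lambda+z\in\Omega(f),\qquad \lambda\in\Omega_{b}(g)\iff\lambda+z\in\Omega_{b}(f),
\]
together with $\hat{g}(\lambda)=\hat{f}(\lambda+z)$ wherever either side exists; in short, $g$ is obtained from $f$ by the rigid translation $\lambda\mapsto\lambda-z$ of all relevant regions.

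With these identities the conclusion is immediate. Suppose $((\lambda_{k}^{1},\ldots,\lambda_{k}^{n}))_{k\in{\mathbb N}}$ is a uniqueness sequence, and let $f$ be locally integrable, admitting $\omega_{1}',\ldots,\omega_{n}'\in[-\infty,+\infty)$ with $\prod_{j=1}^{n}\{\lambda_{j}\in{\mathbb C}:\Re\lambda_{j}>\omega_{j}'\}\subseteq\Omega_{b}(f)\cap\Omega(f)$, with $\omega_{j}'<\Re(\lambda_{k}^{j}+z_{j})$ for all $k,j$, and with $\hat{f}(\lambda_{k}^{1}+z_{1},\ldots,\lambda_{k}^{n}+z_{n})=0$ for all $k\in{\mathbb N}$. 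Putting $\omega_{j}:=\omega_{j}'-\Re z_{j}$, the translation of regions gives $\prod_{j=1}^{n}\{\lambda_{j}\in{\mathbb C}:\Re\lambda_{j}>\omega_{j}\}\subseteq\Omega_{b}(g)\cap\Omega(g)$; the inequality $\omega_{j}'<\Re\lambda_{k}^{j}+\Re z_{j}$ rearranges to $\omega_{j}<\Re\lambda_{k}^{j}$; and $\hat{g}(\lambda_{k}^{1},\ldots,\lambda_{k}^{n})=\hat{f}(\lambda_{k}^{1}+z_{1},\ldots,\lambda_{k}^{n}+z_{n})=0$. Thus $g$ meets every hypothesis of Definition \ref{mila} relative to the uniqueness sequence $((\lambda_{k}^{1},\ldots,\lambda_{k}^{n}))_{k\in{\mathbb N}}$, whence $g=0$ a.e.; since the modulating factor never vanishes, $f=0$ a.e. The only step requiring (mild) care is the region bookkeeping above — checking that convergence, boundedness and the domain inclusions all transfer cleanly under the shift — but this is purely formal, which is presumably why it is left to the reader.
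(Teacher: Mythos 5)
Your proof is correct. The paper does not actually print a proof of Proposition \ref{nmt} (it is explicitly ``left to the interested readers''), and your argument --- modulating $f$ to $g(t_{1},\ldots,t_{n})=e^{-z_{1}t_{1}-\cdots-z_{n}t_{n}}f(t_{1},\ldots,t_{n})$, observing that the partial Laplace integrals of $g$ at $\lambda$ coincide with those of $f$ at $\lambda+z$ so that $\Omega(g)$, $\Omega_{b}(g)$ and $\hat{g}$ are rigid translates of the corresponding objects for $f$, and reducing to one implication by the symmetry $z\mapsto -z$ --- is precisely the intended elementary argument, with the region and parameter bookkeeping ($\omega_{j}:=\omega_{j}'-\Re z_{j}$) carried out correctly.
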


Now we will state and prove the folowing results:

\begin{prop}\label{gade}
Suppose that $a_{i}>0$ and $b_{i}>0$ for $1\leq i\leq n$.
\begin{itemize}
\item[(i)] Let $a=(a_{1},...,a_{n})$, $b= (b_{1},...,b_{n})$ and 
$$
e_{-\lambda_{k},a,b}\bigl(t_{1},...,t_{n}\bigr):=e^{-(a_{1}+k_{1}b_{1})t_{1}-...-(a_{n}+k_{n}b_{n})t_{n}},
$$ 
for any $t=(t_{1},...,t_{n})\in [0,+\infty)^{n}$ and $k=(k_{1},...,k_{n})\in {\mathbb N}_{0}^{n}.$
Then the set $\{e_{-\lambda_{k},a,b}(\cdot): k=(k_{1},...,k_{n})\in {\mathbb N}_{0}^{n}\}$ is total in $L^{1}([0,+\infty)^{n}).$
\item[(ii)] The sequence $((a_{1}+k_{1}b_{1},...,a_{n}+k_{n}b_{n}))_{ (k_{1},...,k_{n})\in {\mathbb N}_{0}^{n}}$ is a uniqueness sequence for the multidimensional Laplace transform.
\end{itemize}
\end{prop}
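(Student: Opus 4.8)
The plan is to prove part (i) first and then deduce part (ii) using the characterization of uniqueness sequences via totality, which is exactly the content of Proposition \ref{total} announced in the introduction. For part (i), I would reduce the multidimensional totality statement to the one-dimensional M\"untz-type theorem by exploiting the product structure of the exponentials. Observe that $e_{-\lambda_{k},a,b}(t_1,\dots,t_n)=\prod_{i=1}^{n}e^{-(a_i+k_ib_i)t_i}$ factors as a tensor product of the one-dimensional functions $g_{k_i}^{(i)}(t_i):=e^{-(a_i+k_ib_i)t_i}$. The key classical input is that for each fixed $i$, the set $\{e^{-(a_i+k_ib_i)t_i}:k_i\in\mathbb{N}_0\}$ is total in $L^1([0,+\infty))$; this is the well-known one-dimensional fact (equivalent to the density statement in \cite[Lemma 1.7.1]{a43}), which holds because the exponents $a_i+k_ib_i$ form an arithmetic progression satisfying $\sum_k (a_i+k_ib_i)^{-1}=+\infty$, so they annihilate no nonzero $L^1$ function by the one-dimensional uniqueness theorem.

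The main step is then to pass from one-dimensional totality in each variable to multidimensional totality of the tensor products. First I would recall that finite linear combinations of functions of the form $\prod_{i=1}^n h_i(t_i)$ with $h_i\in L^1([0,+\infty))$ are dense in $L^1([0,+\infty)^n)$; this is the standard density of the algebraic tensor product $L^1\otimes\cdots\otimes L^1$ in $L^1$ of the product measure space, which follows from density of simple functions and a routine approximation argument. Given this, it suffices to approximate each tensor factor $h_i$ in $L^1([0,+\infty))$ by finite linear combinations of $\{e^{-(a_i+k_ib_i)t_i}:k_i\in\mathbb{N}_0\}$, and then take the product of these approximations. The product of approximating combinations is itself a finite linear combination of the functions $e_{-\lambda_k,a,b}$, and a telescoping estimate controlling the $L^1$ norm of a product when each factor is close to its target (using boundedness of the approximants, which one may arrange since the targets are bounded) yields the desired approximation. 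This establishes that the closed span of $\{e_{-\lambda_k,a,b}:k\in\mathbb{N}_0^n\}$ contains all elementary tensors, hence all of $L^1([0,+\infty)^n)$.

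For part (ii), I would invoke Proposition \ref{total}, which asserts the equivalence between totality of the exponential family $\{\exp(-\lambda_k^1 t_1-\cdots-\lambda_k^n t_n):k\in\mathbb{N}\}$ in $L^1([0,+\infty)^n)$ and the uniqueness-sequence property of $((\lambda_k^1,\dots,\lambda_k^n))_k$, valid whenever $\Re\lambda_k^j>0$. Since here $a_i+k_ib_i>0$ for all indices (as $a_i,b_i>0$), this hypothesis is met after reindexing the countable set $\mathbb{N}_0^n$ as a sequence indexed by $\mathbb{N}$, and part (i) supplies exactly the totality needed. Therefore the sequence is a uniqueness sequence for the multidimensional Laplace transform.

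I expect the main obstacle to be the passage in the second paragraph: rigorously justifying that products of $L^1$-approximants remain close in the product $L^1$ norm, and confirming that density of the algebraic tensor product in $L^1([0,+\infty)^n)$ is applicable verbatim on the infinite-measure product space $[0,+\infty)^n$. The telescoping product estimate requires uniform control on the norms or sup-norms of the approximating combinations, so some care is needed to choose approximants that are simultaneously close in $L^1$ and bounded; one clean route is to first approximate each $h_i$ by a bounded compactly supported function and only then by exponential combinations, keeping all intermediate quantities uniformly bounded.
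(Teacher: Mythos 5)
Your proof of part (i) is correct but takes a genuinely different route from the paper's. The paper applies the Stone--Weierstrass theorem directly in $n$ variables, showing that the monomials $a_1^{-1}\cdots a_n^{-1}t_1^{b_1k_1/a_1}\cdots t_n^{b_nk_n/a_n}$ are total in $C([0,1]^n)$, hence in $L^1((0,1)^n)$, and then transports totality through the explicit isometric isomorphism $\Phi:L^1((0,1)^n)\rightarrow L^1([0,+\infty)^n)$, $[\Phi(g)](t_1,\dots,t_n)=a_1\cdots a_ne^{-a_1t_1-\cdots-a_nt_n}g(e^{-a_1t_1},\dots,e^{-a_nt_n})$, which carries those monomials exactly onto the functions $e_{-\lambda_k,a,b}$. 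You instead use the one-dimensional totality statement in each variable and glue by density of elementary tensors in $L^1([0,+\infty)^n)$. That is valid, and in fact the ``main obstacle'' you worry about is illusory: by Tonelli, $\|h_1\otimes\cdots\otimes h_n\|_{L^1([0,+\infty)^n)}=\prod_{i=1}^n\|h_i\|_{L^1([0,+\infty))}$, so the telescoping estimate closes using only $L^1$ bounds on the approximants (say $\|p_i\|_1\leq\|h_i\|_1+1$); no sup-norm control and no compactly supported intermediate step are needed. The trade-off: the paper's argument is self-contained, while yours imports the one-dimensional M\"untz-type fact but reduces the dimension.

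Part (ii) is where your proposal has a genuine gap. You deduce (ii) from Proposition \ref{total}, but within this paper that is circular: Proposition \ref{total} is proved \emph{after} Proposition \ref{gade}, and precisely the direction you need (totality implies uniqueness sequence) is proved there by the phrase ``repeat verbatim the argumentation contained in the proof of Proposition \ref{gade}(ii)''. So your citation conceals exactly the mathematical content that part (ii) requires. Concretely, what is missing is this: totality of the exponentials in $L^1$ only yields, by duality, that no nonzero $\psi\in L^{\infty}([0,+\infty)^n)$ can satisfy $\hat{\psi}(\lambda_k)=0$ for all $k$. But Definition \ref{mila} concerns an arbitrary locally integrable $f$ whose Laplace integrals converge only improperly on a region $\Omega\subseteq\Omega_{b}(f)\cap\Omega(f)$; such an $f$ need be neither bounded nor absolutely Laplace transformable, so it cannot be paired against $L^1$ functions and totality says nothing about it directly. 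The paper bridges this by the reduction $f\mapsto G\mapsto H$: one forms the iterated antiderivative $G(t_1,\dots,t_n)=\int_0^{t_1}\cdots\int_0^{t_n}f$, which by \cite[Theorem 2.10(ii)-(iii)]{mvlt} satisfies $|G(t)|\leq M\exp(\omega_1t_1+\cdots+\omega_nt_n)$ and inherits the vanishing of the transform; then $H(t):=\exp(-\omega_1t_1-\cdots-\omega_nt_n)G(t)$ is bounded, and $\hat{H}$ vanishes on the shifted lattice $(a_1+k_1b_1-\omega_1,\dots,a_n+k_nb_n-\omega_n)$, which (after discarding finitely many indices so that $A_j=a_j+k_j^{0}b_j-\omega_j>0$) is again a lattice of the form treated in (i). Only at this point does totality apply, giving $\int\psi H=0$ for every $\psi\in L^1([0,+\infty)^n)$, hence $H\equiv 0$, hence $G\equiv 0$ and $f=0$ a.e. Any correct proof of (ii) --- equivalently, of the forward direction of Proposition \ref{total} --- must contain this reduction; your proposal omits it entirely.
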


\begin{proof}
The Stone-Weierstrass theorem implies that the set of functions $\{a_{1}^{-1}\cdot .... \cdot a_{n}^{-1}t_{1}^{b_{1}k_{1}/a_{1}}\cdot ... \cdot t_{n}^{k_{n}b_{n}/a_{n}} : k=(k_{1},...,k_{n})\in {\mathbb N}_{0}^{n}\}$ is total in $C([0,1]^{n})$ and therefore in $L^{1}((0,1)^{n}).$ Now (i) follows from the fact that the mapping $\Phi : L^{1}((0,1)^{n}) \rightarrow L^{1}([0,+\infty)^{n})$, given by{\small
$$
\Bigl[\Phi (g)\Bigr]\bigl(t_{1},...,t_{n}\bigr):=a_{1} \cdot ... \cdot a_{n} e^{-a_{1}t_{1}-...-a_{n}t_{n}}g\bigl( e^{-a_{1}t_{1}},...,e^{-a_{n}t_{n}}\bigr),\ \bigl(t_{1},...,t_{n}\bigr) \in [0,+\infty)^{n} ,
$$}
for any $ g\in L^{1}((0,1)^{n}),$ is an isometric isomorphism which maps the function $a_{1}^{-1}\cdot .... \cdot a_{n}^{-1}t_{1}^{b_{1}k_{1}/a_{1}}\cdot ... \cdot t_{n}^{k_{n}b_{n}/a_{n}}$ to the function $e_{-\lambda_{k},a,b}(\cdot)$ for all $k\in {\mathbb N}_{0}^{n}.$ To prove (ii), suppose that a locally integrable function $f : [0,+\infty)^{n} \rightarrow X$ satisfies that there exist numbers $\omega_{1}' \in [-\infty,+\infty),...,$ $ \omega_{n}'\in [-\infty,+\infty)$ such that $\{\lambda_{1}\in {\mathbb C} : \Re \lambda_{1}>\omega_{1}'\} \times ... \times \{\lambda_{n}\in {\mathbb C} : \Re \lambda_{n}>\omega_{n}'\} \subseteq \Omega_{b}(f) $, $\omega_{j}'<\Re \lambda_{k}^{j}$ for all $k\in {\mathbb N}$, $j\in {\mathbb N}_{n},$ and $\hat{f}(\lambda_{k}^{1},...,\lambda_{k}^{n})=0$ for all $k\in {\mathbb N}.$ Define 
\begin{align}\label{kw}
G\bigl(t_{1},...,t_{n}\bigr):=\int^{t_{1}}_{0}\cdots \int^{t_{n}}_{0}f\bigl(s_{1},...,s_{n}\bigr)\, ds_{1}  ...  \, ds_{n},\quad \bigl(t_{1},...,t_{n} \bigr) \in [0,+\infty)^{n}.\end{align}
By \cite[Theorem 2.10(ii)]{mvlt}, there exists a real number $M\geq 1$ such that $|G(t_{1},...,t_{n})| \leq M\exp(\omega_{1}t_{1}+...+\omega_{n}t_{n}),$ $ (t_{1},...,t_{n} ) \in [0,+\infty)^{n},$ where $\omega_{j}:=\max(\omega_{j}',0)$ for $1\leq j\leq n.$ Keeping in mind \cite[Theorem 2.10(iii)]{mvlt}, it suffices to show that the assumption $\hat{G}\ (  a_{1}+k_{1}b_{1} ,...,a_{n}+k_{n}b_{n}  )=0$ for all $k=(k_{1},...,k_{n})\in {\mathbb N}_{0}^{n}$ with $k_{1}\geq k_{1}^{0},...,\ k_{n}\geq k_{n}^{0}$, where $a_{1}+k_{1}^{0}b_{1}>\omega_{1},...,\ a_{n}+k_{n}^{0}b_{n}>\omega_{n},$  implies 
$H (t_{1},...,t_{n} ) =0,$ $  (t_{1},...,t_{n} ) \in [0,+\infty)^{n},$ where $H (t_{1},...,t_{n} ) :=\exp(-\omega_{1}t_{1}-...-\omega_{n}t_{n})G(t_{1},...,t_{n}),$ $ (t_{1},...,t_{n} ) \in [0,+\infty)^{n}.$ Clearly, we have
$$
\hat{H}\bigl( \lambda_{1},...,\lambda_{n}\bigr)=\hat{G}\bigl(\lambda_{1}+\omega_{1},...,\lambda_{n}+\omega_{n}\bigr),\quad \lambda_{i}>0 \ \ (1\leq i\leq n),
$$
so that the prescribed assumption implies $\hat{H}\ (  a_{1}+k_{1}b_{1}-\omega_{1},...,a_{n}+k_{n}b_{n}-\omega_{n} )=0$ for all $k=(k_{1},...,k_{n})\in {\mathbb N}_{0}^{n}$ such that $k_{1}\geq k_{1}^{0},...,\ k_{n}\geq k_{n}^{0}$. Set $A_{1}:= a_{1}+k_{1}^{0}b_{1} -\omega_{1},...,\ A_{n}:= a_{n}+k_{n}^{0}b_{n} -\omega_{n}.$ Then $A_{1}>0,...,\ A_{n}>0$ and 
$$
\int^{+\infty}_{0}\cdots  \int^{+\infty}_{0}e^{-\bigl(A_{1}+k_{1}b_{1}\bigr)t_{1}-...-\bigl(A_{n}+k_{n}b_{n}\bigr)t_{n}}H\bigl(t_{1},...,t_{n}\bigr) \, dt_{1}... \, dt_{n}=0,
$$
for any $k=(k_{1},...,k_{n})\in {\mathbb N}_{0}^{n}.$ By (i), we get
$$
\int^{+\infty}_{0}\cdots  \int^{+\infty}_{0}\psi\bigl(t_{1},...,t_{n}\bigr)  H\bigl(t_{1},...,t_{n}\bigr) \, dt_{1} ... \, dt_{n}=0,
$$
for any $\psi \in L^{1}([0,+\infty)^{n}),$ which yields $H\equiv 0.$
\end{proof}

The following results do not seem to be  precisely formulated in the one-dimensional setting (cf. also \cite[Proposition 1.4]{mihai}): 

\begin{prop}\label{total}
Suppose that $\Re \lambda_{k}^{j} >0$ for all $k\in {\mathbb N}$ and $j\in {\mathbb N}_{n}$. Then the set
$\{\exp(-\lambda_{k}^{1}t_{1}-...-\lambda_{k}^{n}t_{n}) : k\in {\mathbb N}\}$ is total in $L^{1}([0,+\infty)^{n})$
if and only if the sequence $((\lambda_{k}^{1},...,\lambda_{k}^{n}))_{k\in {\mathbb N}}$ is a uniqueness sequence for the multidimensional Laplace transform.
\end{prop}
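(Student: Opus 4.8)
The plan is to prove the equivalence by connecting totality in $L^{1}([0,+\infty)^{n})$ to the vanishing of functionals represented by $L^{\infty}$ functions, which is precisely the Laplace-transform condition. The key tool is the duality $\bigl(L^{1}([0,+\infty)^{n})\bigr)^{*}=L^{\infty}([0,+\infty)^{n})$: a set $S\subseteq L^{1}$ is total if and only if every $g\in L^{\infty}([0,+\infty)^{n})$ that annihilates all elements of $S$ must vanish a.e. Applying this to $S=\{\exp(-\lambda_{k}^{1}t_{1}-\cdots-\lambda_{k}^{n}t_{n}):k\in{\mathbb N}\}$, totality is equivalent to the statement that whenever $g\in L^{\infty}([0,+\infty)^{n})$ satisfies
\begin{align*}
\int^{+\infty}_{0}\cdots\int^{+\infty}_{0}e^{-\lambda_{k}^{1}t_{1}-\cdots-\lambda_{k}^{n}t_{n}}g\bigl(t_{1},\dots,t_{n}\bigr)\,dt_{1}\cdots dt_{n}=0,\quad k\in{\mathbb N},
\end{align*}
then $g=0$ a.e. The integrand is absolutely integrable because $\Re\lambda_{k}^{j}>0$ and $g$ is bounded, so each integral is the Laplace transform $\hat{g}(\lambda_{k}^{1},\dots,\lambda_{k}^{n})$ evaluated at the prescribed points.

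For the direction where totality implies the uniqueness-sequence property, I would take a locally integrable $f$ as in Definition \ref{mila}, with a region of boundedness $\Omega$ and $\omega_{j}<\Re\lambda_{k}^{j}$, satisfying $\hat{f}(\lambda_{k}^{1},\dots,\lambda_{k}^{n})=0$ for all $k$. The subtlety is that $f$ itself need not be bounded, so I cannot directly feed it into the totality criterion. The natural fix is to pass to the iterated primitive $G$ defined exactly as in \eqref{kw} of the previous proof, which by \cite[Theorem 2.10(ii)]{mvlt} satisfies an exponential bound $|G(t)|\leq M\exp(\omega_{1}t_{1}+\cdots+\omega_{n}t_{n})$. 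After the exponential rescaling $H(t):=\exp(-\omega_{1}t_{1}-\cdots-\omega_{n}t_{n})G(t)$ (with $\omega_{j}:=\max(\omega_{j}',0)$), the function $H$ is bounded, hence lies in $L^{\infty}$. A routine computation shifting the spectral parameters shows that $\hat{f}(\lambda_{k})=0$ forces $\hat{H}$ to vanish at the translated points $\lambda_{k}^{j}-\omega_{j}$, which still have positive real part; totality at these points (granted since the totality hypothesis is parameter-free once real parts are positive) then gives $H\equiv 0$, hence $G\equiv 0$, hence $f=0$ a.e. by \cite[Theorem 2.10(iii)]{mvlt}.

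For the converse, suppose $((\lambda_{k}^{1},\dots,\lambda_{k}^{n}))_{k}$ is a uniqueness sequence and let $g\in L^{\infty}([0,+\infty)^{n})$ annihilate the exponential family. A bounded function is automatically Laplace transformable with region of convergence containing $({\mathbb C}_{+})^{n}$, and the origin-adjacent half-space region $\Omega$ required in Definition \ref{mila} is available with each $\omega_{j}=0<\Re\lambda_{k}^{j}$. Since the annihilation conditions are exactly $\hat{g}(\lambda_{k}^{1},\dots,\lambda_{k}^{n})=0$, the uniqueness-sequence hypothesis yields $g=0$ a.e. By the duality criterion this establishes totality, closing the equivalence.

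The main obstacle I anticipate is the forward direction, specifically the gap between the two notions of admissible function: totality of the exponentials is phrased against bounded ($L^{\infty}$) test functions, whereas the uniqueness-sequence definition quantifies over all locally integrable $f$ with only a region-of-boundedness constraint. Bridging this gap is precisely why one must replace $f$ by the primitive $G$ and then normalize to the bounded function $H$; getting the exponential shift in the Laplace parameters to line up correctly, and verifying that the shifted points still satisfy the positivity $\Re\lambda_{k}^{j}-\omega_{j}>0$ needed to apply totality, is the delicate bookkeeping. This is entirely analogous to the reduction already carried out in the proof of Proposition \ref{gade}, so I would lean on the same machinery from \cite[Theorem 2.10]{mvlt} rather than reprove the exponential estimates.
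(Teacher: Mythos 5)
Your converse implication (uniqueness sequence $\Rightarrow$ totality) is correct and coincides with the paper's own argument: by Hahn--Banach, failure of totality produces a nonzero $\psi\in L^{\infty}([0,+\infty)^{n})$ annihilating the exponentials, and such a $\psi$ is admissible in Definition \ref{mila} with $\omega_{j}=0<\Re\lambda_{k}^{j}$, a contradiction. Your forward implication also follows the paper's intended route (the primitive $G$ of \eqref{kw}, the growth bound of \cite[Theorem 2.10(ii)]{mvlt}, the normalization $H=e^{-\omega_{1}t_{1}-\cdots-\omega_{n}t_{n}}G$), but it has a genuine gap at exactly the step you dismiss with ``granted since the totality hypothesis is parameter-free once real parts are positive''. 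After normalization the hypotheses give $\hat{H}(\lambda_{k}^{1}-\omega_{1},\dots,\lambda_{k}^{n}-\omega_{n})=0$, so what you need is totality of the \emph{shifted} system $\{e^{-(\lambda_{k}^{1}-\omega_{1})t_{1}-\cdots-(\lambda_{k}^{n}-\omega_{n})t_{n}}:k\in{\mathbb N}\}$; the hypothesis gives totality of the unshifted system only, and totality of exponential systems is \emph{not} shift-invariant. Already for $n=1$: with $\lambda_{k}:=1+\frac{1}{k}+i\sqrt{k}$, the system $\{e^{-\lambda_{k}t}\}$ is total in $L^{1}([0,+\infty))$, since any annihilator $h\in L^{\infty}$ has $\hat{h}$ analytic on ${\mathbb C}_{+}$ and bounded on $\{\Re\lambda>\frac{1}{2}\}$, where the $\lambda_{k}$ violate the Blaschke condition of that half-plane (indeed $\sum_{k}(\Re\lambda_{k}-\frac{1}{2})/(1+|\lambda_{k}|^{2})=+\infty$), forcing $\hat{h}\equiv 0$ and $h=0$ a.e.; but the shifted points $\mu_{k}:=\lambda_{k}-1=\frac{1}{k}+i\sqrt{k}$ satisfy $\sum_{k}\Re\mu_{k}/(1+|\mu_{k}|^{2})<+\infty$, so the Blaschke product $B$ with zeros $(\mu_{k}-1)/(\mu_{k}+1)$ yields a bounded analytic $\beta(\lambda):=B\bigl((\lambda-1)/(\lambda+1)\bigr)$ on ${\mathbb C}_{+}$ vanishing precisely at the $\mu_{k}$, and $\beta(\lambda)/(1+\lambda)^{2}$ is the Laplace transform of a nonzero bounded continuous $g$ (write $\beta(\lambda)/(1+\lambda)\in H^{2}({\mathbb C}_{+})$ as $\hat{u}$ with $u\in L^{2}$ by Paley--Wiener and convolve with $e^{-t}$); hence the shifted system is not total.

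The reason the same manoeuvre is legitimate inside Proposition \ref{gade}(ii) is that part (i) there establishes totality for \emph{every} pair of positive parameter vectors, so the lattice family is closed under the shifts created by the normalization; for a general sequence that closure is precisely what is missing, and your reduction is complete only when the numbers $\omega_{j}$ of Definition \ref{mila} can all be taken $\leq 0$, so that no shift occurs. You are in good company---the paper's own proof (``repeat verbatim the argumentation contained in the proof of Proposition \ref{gade}(ii)'') passes over the identical point---but this is not repairable bookkeeping: setting $f(t):=e^{t}g(t)$ with $g$ as above gives a nonzero locally integrable $f$, admissible in Definition \ref{mila} with $\omega_{1}=1<\Re\lambda_{k}$, satisfying $\hat{f}(\lambda_{k})=\hat{g}(\mu_{k})=0$ for all $k$ (this is the shift-invariance of Proposition \ref{nmt} in action). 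So the total sequence $(1+\frac{1}{k}+i\sqrt{k})_{k\in{\mathbb N}}$ is not a uniqueness sequence, and the implication ``totality $\Rightarrow$ uniqueness sequence'' in the form you (and the paper) assert it fails once $\inf_{k}\Re\lambda_{k}^{j}>0$ is allowed, unless the class of admissible $f$ in Definition \ref{mila} is restricted to those with $\omega_{j}\leq 0$---which is the notion that the $L^{1}$/$L^{\infty}$ duality actually captures.
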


\begin{proof}
If the set
$\{\exp(-\lambda_{k}^{1}t_{1}-...-\lambda_{k}^{n}t_{n}) : k\in {\mathbb N}\}$ is total in $L^{1}([0,+\infty)^{n}),$
then we can repeat verbatim the argumentation contained in the proof of Proposition \ref{gade}(ii) to deduce that the sequence $((\lambda_{k}^{1},...,\lambda_{k}^{n}))_{k\in {\mathbb N}}$ is a uniqueness sequence for the multidimensional Laplace transform. Let us prove the converse statement. Suppose the contrary; then the Hanh-Banach theorem implies the existence of a non-trivial function $\psi \in L^{\infty}([0,+\infty)^{n})$ such that 
$$
\int^{+\infty}_{0}...\int^{+\infty}_{0}e^{-\lambda_{k}^{1}t_{1}-...-\lambda_{k}^{n}t_{n}}\psi\bigl(t_{1},...,t_{n}\bigr)\, dt_{1}\, ...\, dt_{n}=0,\quad k\in {\mathbb N}.
$$
In other words, $\hat{\psi}(\lambda_{k}^{1},...,\lambda_{k}^{n})=0,$ $k\in {\mathbb N}$ but $\psi \neq 0$ a.e., which contradicts our assumption that $((\lambda_{k}^{1},...,\lambda_{k}^{n}))_{k\in {\mathbb N}}$ is a uniqueness sequence for the multidimensional Laplace transform. 
\end{proof}

\begin{prop}\label{split}
Suppose that $m\in {\mathbb N},$ $((\lambda_{k}^{j,1},...,\lambda_{k}^{j,n}))_{k\in {\mathbb N}}$ is a sequence  in ${\mathbb C}^{n}$ for every $j\in {\mathbb N}_{m}$, there exist real numbers $\omega_{1},...,\omega_{n}$ such that $\omega_{1}<\Re \lambda_{k}^{j,1},...,\omega_{n}<\Re \lambda_{k}^{j,n}$ for every $k\in {\mathbb N}$ and $j\in {\mathbb N}_{m}$,
$((\lambda_{k}^{1},...,\lambda_{k}^{n}))_{k\in {\mathbb N}}$ is a sequence in ${\mathbb C}^{n}$ and, for every $k\in {\mathbb N},$ there exist $j\in {\mathbb N}_{m}$ and $s\in {\mathbb N}$ such that $(\lambda_{k}^{1},...,\lambda_{k}^{n})=(\lambda_{s}^{j,1},...,\lambda_{s}^{j,n}).$
If $((\lambda_{k}^{1},...,\lambda_{k}^{n}))_{k\in {\mathbb N}}$ is a uniqueness sequence for the multidimensional Laplace transform, then there exists $j\in {\mathbb N}_{m}$ such that $((\lambda_{k}^{j,1},...,\lambda_{k}^{j,n}))_{k\in {\mathbb N}}$ is a uniqueness sequence for the multidimensional Laplace transform.
\end{prop}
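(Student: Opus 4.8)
The plan is to argue by contraposition: assuming that \emph{none} of the $m$ sequences $((\lambda_{k}^{j,1},\dots,\lambda_{k}^{j,n}))_{k\in{\mathbb N}}$ is a uniqueness sequence, I will manufacture a single nonzero function whose multidimensional Laplace transform annihilates the combined sequence, contradicting the hypothesis. First I would normalize the real parts. Since $\omega_{i}<\Re\lambda_{k}^{j,i}$ holds \emph{simultaneously} for every $j\in{\mathbb N}_{m}$, $k\in{\mathbb N}$ and $i\in{\mathbb N}_{n}$, Proposition \ref{nmt} lets me translate every sequence involved (the combined one together with all $m$ of them) by the common vector $(-\omega_{1},\dots,-\omega_{n})$; this preserves the property of being or not being a uniqueness sequence, so there is no loss of generality in assuming $\Re\lambda_{k}^{j,i}>0$ and $\Re\lambda_{k}^{i}>0$ throughout. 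In this regime Proposition \ref{total} becomes available.

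Next, for each fixed $j$ I would produce a bounded annihilator. Because $((\lambda_{k}^{j,1},\dots,\lambda_{k}^{j,n}))_{k\in{\mathbb N}}$ is assumed not to be a uniqueness sequence, Proposition \ref{total} shows that the family $\{\exp(-\lambda_{s}^{j,1}t_{1}-\cdots-\lambda_{s}^{j,n}t_{n}):s\in{\mathbb N}\}$ is not total in $L^{1}([0,+\infty)^{n})$, and the Hahn--Banach argument used in the proof of Proposition \ref{total} then yields a nonzero $\psi_{j}\in L^{\infty}([0,+\infty)^{n})$ with $\hat{\psi}_{j}(\lambda_{s}^{j,1},\dots,\lambda_{s}^{j,n})=0$ for all $s\in{\mathbb N}$. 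I then set $\psi:=\psi_{1}\ast_{0}\cdots\ast_{0}\psi_{m}$. Each $\psi_{j}$ is bounded, hence absolutely Laplace transformable on the tube $T:=\{\lambda\in{\mathbb C}^{n}:\Re\lambda_{i}>0,\ 1\le i\le n\}$, and the iterated finite convolution has at most polynomial growth, so $T\subseteq\Omega_{abs}(\psi)\subseteq\Omega_{b}(\psi)\cap\Omega(\psi)$; applying Lemma \ref{nova} with $l=n$ (so that $\ast_{0}^{n,1}=\ast_{0}$) repeatedly gives the product formula $\hat{\psi}(\lambda)=\hat{\psi}_{1}(\lambda)\cdots\hat{\psi}_{m}(\lambda)$ on $T$. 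Consequently $\hat{\psi}$ vanishes at every point of $\bigcup_{j=1}^{m}\{(\lambda_{s}^{j,1},\dots,\lambda_{s}^{j,n}):s\in{\mathbb N}\}$, and in particular at every $(\lambda_{k}^{1},\dots,\lambda_{k}^{n})$, since by hypothesis each term of the combined sequence coincides with a term of one of the $m$ sequences.

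The heart of the matter, and the step I expect to be the main obstacle, is verifying that $\psi\neq0$ a.e. For this I would invoke the identity theorem for several complex variables: $T$ is a connected open subset of ${\mathbb C}^{n}$ and each $\hat{\psi}_{j}$ is holomorphic there, whence the ring of holomorphic functions on $T$ has no zero divisors. Thus if $\psi$ were $0$ a.e., then $\hat{\psi}_{1}\cdots\hat{\psi}_{m}\equiv0$ on $T$ would force $\hat{\psi}_{j_{0}}\equiv0$ on $T$ for some $j_{0}$; restricting to the lattice $\{(1+s_{1},\dots,1+s_{n}):(s_{1},\dots,s_{n})\in{\mathbb N}_{0}^{n}\}\subseteq T$, which is a uniqueness sequence by Proposition \ref{gade}(ii) (take $a_{i}=b_{i}=1$), we would obtain $\psi_{j_{0}}=0$ a.e., contradicting the choice of $\psi_{j_{0}}$. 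Hence $\psi$ is a nonzero locally integrable function with $T\subseteq\Omega_{b}(\psi)\cap\Omega(\psi)$, abscissae $\omega_{i}''=0<\Re\lambda_{k}^{i}$, and $\hat{\psi}(\lambda_{k}^{1},\dots,\lambda_{k}^{n})=0$ for all $k$, so the combined sequence fails to be a uniqueness sequence in the sense of Definition \ref{mila} --- the desired contradiction.

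The only remaining points demanding care are the absolute-convergence hypotheses (a)--(b) of Lemma \ref{nova} at each stage of the iterated convolution; these follow routinely from the boundedness of the $\psi_{j}$ and the polynomial growth (hence local integrability and $T\subseteq\Omega_{abs}$) of the partial convolutions $\psi_{1}\ast_{0}\cdots\ast_{0}\psi_{r}$. I would also remark that the reduction in the first paragraph is precisely what makes Proposition \ref{total} applicable and guarantees that the extracted annihilators $\psi_{j}$ can be taken in $L^{\infty}$, which in turn is what keeps the convolution and its Laplace transform well behaved on all of $T$.
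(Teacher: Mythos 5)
Your proof is correct, and its skeleton is the same as the paper's: assume none of the $m$ component sequences is a uniqueness sequence, produce one annihilating function per index $j$, convolve them with $\ast_{0}$, use Lemma \ref{nova} to conclude that the transform of the convolution vanishes along the combined sequence, and contradict the hypothesis. The two places where you genuinely diverge are worth recording. First, the paper extracts its annihilators $f_{j}$ directly from the failure of Definition \ref{mila} and must then regularize them (via Proposition \ref{nmt} and the primitive-taking trick from the proof of Proposition \ref{gade}(ii), so that without loss of generality each $f_{j}$ is continuous with positive exponential bounds) before Lemma \ref{nova} can be applied; you instead obtain $L^{\infty}$ annihilators $\psi_{j}$ from the contrapositive of Proposition \ref{total} together with Hahn--Banach, which makes the hypotheses (a)--(b) of Lemma \ref{nova} essentially automatic on the tube $T$ and eliminates the smoothing step entirely. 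Second, the paper closes with the unproved assertion that $f\equiv 0$ ``easily implies'' $f_{j}=0$ a.e.; you supply the missing argument explicitly: holomorphic functions on the connected tube $T$ form an integral domain, so $\hat{\psi}_{1}\cdots\hat{\psi}_{m}\equiv 0$ forces $\hat{\psi}_{j_{0}}\equiv 0$ for some $j_{0}$, and evaluating on the lattice uniqueness sequence of Proposition \ref{gade}(ii) then gives $\psi_{j_{0}}=0$ a.e., the desired contradiction. In short, the strategy is identical, but your route through $L^{1}$--$L^{\infty}$ duality buys bounded annihilators and routine convergence checks, while the paper's route stays inside Definition \ref{mila} at the cost of a regularization step and a glossed-over final implication.
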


\begin{proof}
Suppose the contrary. Then, for every $j\in {\mathbb N}_{m},$ there exist non-trivial locally integrable function $f_{j} : [0,+\infty)^{n} \rightarrow {\mathbb C}$ and numbers $\omega_{j,1} \in [-\infty,+\infty),...,$ $ \omega_{j,n}\in [-\infty,+\infty)$ with $\Omega_{j}:=\{\lambda_{1}\in {\mathbb C} : \Re \lambda_{1}>\omega_{j,1}\} \times ... \times \{\lambda_{n}\in {\mathbb C} : \Re \lambda_{n}>\omega_{j,n}\} \subseteq \Omega_{b}(f_{j}) \cap \Omega(f_{j})$, $\omega_{j,s}<\Re \lambda_{k}^{j,s}$ for all $k\in {\mathbb N}$, $s\in {\mathbb N}_{n}$, and $\hat{f_{j}}(\lambda_{k}^{j,1},...,\lambda_{k}^{j,n})=0$ for all $k\in {\mathbb N}.$ Keeping in mind Proposition \ref{nmt}, the proof of Proposition \ref{gade}(ii) and the prescribed assumption that $\omega_{1}<\Re \lambda_{k}^{j,1},...,\omega_{n}<\Re \lambda_{k}^{j,n}$ for every $k\in {\mathbb N}$ and $j\in {\mathbb N}_{m}$, we may assume without loss of generality that $\omega_{j}>0$ and the function $f_{j}(\cdot)$ is continuous for all $j\in {\mathbb N}_{m}.$ Define
$f(t_{1},...,t_{n}):=(f_{1} \ast_{0} ... \ast_{0} f_{m})(t_{1},...t_{n}),$ $(t_{1},...t_{n})\in [0,+\infty)^{n}.$ Then Lemma \ref{nova} implies together with the assumption that, for every $k\in {\mathbb N},$ there exist $j\in {\mathbb N}_{m}$ and $s\in {\mathbb N}$ such that $(\lambda_{k}^{1},...,\lambda_{k}^{n})=(\lambda_{s}^{j,1},...,\lambda_{s}^{j,n}),$ that $\hat{f}(\lambda_{k}^{1},...,\lambda_{k}^{n})=\hat{f_{1}}(\lambda_{k}^{1},...,\lambda_{k}^{n})\cdot ... \cdot \hat{f_{m}}(\lambda_{k}^{1},...,\lambda_{k}^{n})=0,$ $k\in {\mathbb N},$ and therefore $f(t_{1},...,t_{n})=0,$ $t\in [0,+\infty)^{n}.$ This easily implies $f_{j}=0$ a.e. ($j\in {\mathbb N}_{m}$), which is a contradiction.
\end{proof}              

In particular, if $m\in {\mathbb N}$,
$((\lambda_{k}^{1},...,\lambda_{k}^{n}))_{k\in {\mathbb N}}$ is a uniqueness sequence for the multidimensional Laplace transform and there exist real numbers $\omega_{1},...,\omega_{n}$ such that $\omega_{1}<\Re \lambda_{k}^{1},...,\omega_{n}<\Re \lambda_{k}^{n}$ for every $k\in {\mathbb N}$, then Proposition \ref{split} yields that at least one of the sequences 
\begin{align*} \bigl
((\lambda_{km}^{1},...,\lambda_{km}^{n})\bigr)_{k\in {\mathbb N}},\ \bigl((\lambda_{km+1}^{1},...,\lambda_{km+1}^{n})\bigr)_{k\in {\mathbb N}}, ...,\ \bigl((\lambda_{km+(m-1)}^{1},...,\lambda_{km+(m-1)}^{n})\bigr)_{k\in {\mathbb N}},
\end{align*} 
is a uniqueness sequence for the multidimensional Laplace transform.                             

It is worth observing that Proposition \ref{gade}(ii) can be deduced directly from \cite[Proposition 1.7.2]{a43} and the following result:

\begin{thm}\label{tor}
Suppose that $\Re \lambda_{k}^{j} >0$ for all $k\in {\mathbb N}$ and $1\leq j\leq n.$ Then $(\lambda_{k}^{1})_{k\in {\mathbb N}},...,\ (\lambda_{k}^{n})_{k\in {\mathbb N}}$ are the uniqueness sequences for the one-dimensional Laplace transform if and only if $((\lambda_{k_{1}}^{1},...,\lambda_{k_{n}}^{n}))_{  (k_{1},...,k_{n})\in {\mathbb N}^{n}}$ is a uniqueness sequence for the multidimensional Laplace transform.
\end{thm}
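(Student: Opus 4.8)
The plan is to reduce the entire statement to a totality assertion by invoking Proposition \ref{total}. Applied in $n$ variables, that proposition says that $((\lambda_{k_1}^1,\dots,\lambda_{k_n}^n))_{(k_1,\dots,k_n)\in{\mathbb N}^n}$ is a uniqueness sequence for the multidimensional Laplace transform if and only if the tensor family
\[
\Bigl\{ \prod_{j=1}^{n} e^{-\lambda_{k_j}^{j}t_{j}} : (k_1,\dots,k_n)\in{\mathbb N}^n \Bigr\}
\]
is total in $L^{1}([0,+\infty)^n)$; applied with a single variable, it says that each $(\lambda_k^j)_{k\in{\mathbb N}}$ is a uniqueness sequence for the one-dimensional Laplace transform if and only if $\{e^{-\lambda_k^j t} : k\in{\mathbb N}\}$ is total in $L^{1}([0,+\infty))$. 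Thus it suffices to show that the tensor family is total in $L^{1}([0,+\infty)^n)$ if and only if each of the $n$ single-variable families is total in $L^{1}([0,+\infty))$. Throughout, the hypothesis $\Re\lambda_k^j>0$ is what guarantees that every partial Laplace integral appearing below converges absolutely.

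For the easy direction I would argue by contraposition: if some factor family, say $\{e^{-\lambda_k^{j_0}t}\}$, fails to be total, then by the Hahn--Banach theorem there is a nonzero $\psi_{j_0}\in L^{\infty}([0,+\infty))$ with $\int_0^{+\infty} e^{-\lambda_k^{j_0}t}\psi_{j_0}(t)\,dt=0$ for all $k$. Setting $\psi(t_1,\dots,t_n):=\psi_{j_0}(t_{j_0})\prod_{j\neq j_0}e^{-t_j}$, which is a nonzero element of $L^{\infty}([0,+\infty)^n)$, Fubini's theorem factorizes the pairing of $\psi$ with each tensor exponential into a product of one-variable integrals whose $j_0$-th factor vanishes; hence $\psi$ annihilates the whole tensor family, which therefore cannot be total.

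For the harder direction I would proceed by induction on $n$, the case $n=1$ being the one-variable equivalence already recorded above. For the inductive step, assume each factor family is total and let $\psi\in L^{\infty}([0,+\infty)^n)$ annihilate every tensor exponential. Fixing $k_2,\dots,k_n$ and integrating out the last $n-1$ variables, I would form
\[
\Psi_{k_2,\dots,k_n}(t_1):=\int_{[0,+\infty)^{n-1}}\psi(t_1,\dots,t_n)\prod_{j=2}^{n}e^{-\lambda_{k_j}^{j}t_j}\,dt_2\cdots dt_n ,
\]
which is a bounded measurable function of $t_1$ because $\psi\in L^{\infty}$ and $\Re\lambda_{k_j}^j>0$. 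The annihilation hypothesis says precisely that $\int_0^{+\infty}\Psi_{k_2,\dots,k_n}(t_1)e^{-\lambda_{k_1}^1 t_1}\,dt_1=0$ for every $k_1$, so totality of the first factor family forces $\Psi_{k_2,\dots,k_n}=0$ a.e.\ on $[0,+\infty)$. Taking the union of the countably many exceptional null sets, I obtain a single null set $N$ such that for every $t_1\notin N$ the slice $\psi(t_1,\cdot)\in L^{\infty}([0,+\infty)^{n-1})$ annihilates the tensor family built from $(\lambda_k^2)_k,\dots,(\lambda_k^n)_k$; by the induction hypothesis $\psi(t_1,\cdot)=0$ a.e.\ for a.e.\ $t_1$, and Fubini then yields $\psi=0$ a.e., that is, totality of the full tensor family.

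I expect the main obstacle to lie in this inductive step: one must verify that the partial integral $\Psi_{k_2,\dots,k_n}$ is a genuine bounded measurable function to which the one-dimensional totality criterion applies, and that the slice-wise vanishing obtained for each index tuple can be assembled, over the countable index set and via Fubini, into almost-everywhere vanishing of $\psi$ on $[0,+\infty)^n$. The positivity assumption $\Re\lambda_k^j>0$ is used repeatedly to keep all of these partial integrals absolutely convergent.
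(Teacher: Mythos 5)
Your argument is correct, but it takes a genuinely different route from the paper's. You collapse both directions into one functional-analytic statement --- the tensor family $\bigl\{\prod_{j=1}^{n}e^{-\lambda_{k_j}^{j}t_{j}}\bigr\}$ is total in $L^{1}([0,+\infty)^{n})$ if and only if each factor family is total in $L^{1}([0,+\infty))$ --- and reach the theorem by citing Proposition \ref{total} (together with Remark \ref{pot} to pass between the index sets ${\mathbb N}^{n}$ and ${\mathbb N}$), proving the totality equivalence by $L^{1}$--$L^{\infty}$ duality, Fubini and induction on $n$. The paper instead works directly with Definition \ref{mila}: for the forward direction it forms the primitive $G$ of \eqref{kw}, invokes the growth estimate and uniqueness statements of \cite[Theorem 2.10(ii)-(iii)]{mvlt}, and peels off one variable at a time via Fubini, applying one-dimensional uniqueness to the continuous inner integrals; for the converse it uses the explicit construction $f_{1}(t_{1},...,t_{n}):=f^{[1]}(t_{1})$, whose lattice transform values are $\hat{f}(\lambda_{k_{1}}^{1})/(\lambda_{k_{1}}^{1}\cdots\lambda_{k_{n}}^{n})$, and then quotes multidimensional uniqueness. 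Structurally, your induction is the dual of the paper's peeling argument (you integrate variables out of an $L^{\infty}$ annihilator and use totality, the paper integrates them out of $G$ and uses uniqueness), and your contraposition via Hahn--Banach replaces the paper's explicit construction. What your route buys is modularity: the measure-theoretic content is isolated in a reusable lemma about tensor products of total families, and the machinery of \cite{mvlt} enters only through Proposition \ref{total}, which precedes Theorem \ref{tor} in the paper and does not depend on it, so there is no circularity. Two points you should make explicit: (i) Proposition \ref{total} with $n=1$ is stated for the Definition \ref{mila} notion, so you are tacitly identifying it with the classical one-dimensional notion appearing in the statement of Theorem \ref{tor}; this is true (in one variable $\{\lambda\in{\mathbb C} : \Re\lambda>\text{abs}(f)\}\subseteq\Omega(f)\cap\Omega_{b}(f)$, so the admissible classes of $f$ coincide), but it deserves a sentence; (ii) the measurability and boundedness of the partial integrals $\Psi_{k_{2},...,k_{n}}$ and the assembly of countably many null sets, which you flag yourself, are indeed routine consequences of Fubini--Tonelli under the standing assumption $\Re\lambda_{k}^{j}>0$.
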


\begin{proof}
Let $(\lambda_{k}^{1})_{k\in {\mathbb N}},...,\ (\lambda_{k}^{n})_{k\in {\mathbb N}}$ be the uniqueness sequences for  one-dimensional Laplace transform, and let a locally integrable function $f : [0,+\infty)^{n} \rightarrow X$ satisfy that there exist numbers $\omega_{1} \in [-\infty,+\infty),...,\ \omega_{n}\in [-\infty,+\infty)$ such that $\Omega \subseteq \Omega_{b}(f)\cap \Omega(f)$, $\omega_{j}<\Re \lambda_{k}^{j}$ for all $k\in {\mathbb N}$, $j\in {\mathbb N}_{n}$, and $\hat{f}(\lambda_{k_{1}}^{1},...,\lambda_{k_{n}}^{n})=0$ for all $k\in {\mathbb N}$. Define the function $G(\cdot)$ in the same way as in the proof of Proposition \ref{gade}(ii). Taking into account \cite[Theorem 2.10(ii)-(iii)]{mvlt} and the assumption $\Re \lambda_{k}^{j} >0$ for all $k\in {\mathbb N}$ and $1\leq j\leq n$, we get 
$\hat{G}(\lambda_{k_{1}}^{1},...,\lambda_{k_{n}}^{n})=0$ for all $(k_{1},...,k_{n})\in {\mathbb N}^{n};$ furthermore, fixing numbers $t_{2}\geq 0,...,\ t_{n}\geq 0$ and tuple $(k_{2},...,k_{n})\in {\mathbb N}^{n-1},$ we can apply the 
Fubini theorem in order to see that
$$
\int^{+\infty}_{0}e^{-\lambda_{k_{1}}^{1}t_{1}}\Biggl[ \int^{+\infty}_{0}\cdots \int^{+\infty}_{0} e^{-\lambda_{k_{2}}^{2}t_{2}-...-\lambda_{k_{n}}^{n}t_{n}}G\bigl( t_{1},t_{2},...,t_{n}\bigr)\, dt_{2} ....\, dt_{n}\Biggr]\, dt_{1}=0,
$$
for any $k_{1}\in {\mathbb N}.$ Hence,
$$
 \int^{+\infty}_{0}\cdots \int^{+\infty}_{0} e^{-\lambda_{k_{2}}^{2}t_{2}-...-\lambda_{k_{n}}^{n}t_{n}}G\bigl( t_{1},t_{2},...,t_{n}\bigr)\, dt_{2} ....\, dt_{n}=0,
$$
for any $t_{1}\geq 0$ and $(k_{2},...,k_{n})\in {\mathbb N}^{n-1}.$ Repeating this procedure, we get
$G\equiv 0,$ which clearly 
implies $f=0$ a.e. and completes the first part of proof. 

Conversely, suppose that $((\lambda_{k_{1}}^{1},...,\lambda_{k_{n}}^{n}))_{  (k_{1},...,k_{n})\in {\mathbb N}^{n}}$ is a uniqueness sequence for the multidimensional Laplace transform. We will only prove that $(\lambda_{k}^{1})_{k\in {\mathbb N}}$ is a uniqueness sequence for the one-dimensional Laplace transform.
Towards this end, suppose that $f\in L_{loc}^{1}([0,+\infty)),$ abs$(f)<\Re \lambda_{k}^{1}$, $k\in {\mathbb N}$ and $\hat{f}(\lambda_{k}^{1})=0$ for all $k\in {\mathbb N}.$ Let us consider the continuous function $t\mapsto f^{[1]}(t):=\int^{t}_{0}f(s)\, ds,$ $t\geq 0.$ Define $f_{1}(t_{1},...,t_{n}):=f^{[1]}(t_{1}),$ $(t_{1},...,t_{n}) \in [0,+\infty)^{n},$
$\omega_{1}:=\max(\text{abs}(f),0)$ and $\omega_{2}:=...:=\omega_{n}:=0.$ Then we have $\omega_{1} \in [-\infty,+\infty)$, $\Omega \subseteq \Omega_{b}(f_{1})\cap \Omega(f_{1})$, $\omega_{j}<\Re \lambda_{k}^{j}$ for all $k\in {\mathbb N}$,  $j\in {\mathbb N}_{n}$, and $\hat{f_{1}}(\lambda_{k_{1}}^{1},...,\lambda_{k_{n}}^{n})=\hat{f}(\lambda_{k_{1}}^{1})/(\lambda_{k_{1}}^{1}\cdot ... \cdot \lambda_{k_{n}}^{n})=0$ for all $(k_{1},...,k_{n})\in {\mathbb N}^{n}$. Consequently, we have $f^{[1]}(t_{1})=0$, $t_{1}\geq 0$ and $f=0$ a.e., which completes the second part of proof.
\end{proof} 

The subsequent result can be deduced following the lines of proof of Theorem \ref{tor}, by considering the function{\small
$$
f_{j_{1},...,j_{s}}\bigl(t_{1},...,t_{n}\bigr):=\int^{t_{j_{1}}}_{0}\cdots \int^{t_{j_{s}}}_{0} f\bigl(r_{j_{1}},...,r_{j_{s}}\bigr)\, dr_{j_{1}}...\, dr_{j_{s}},\ \bigl(t_{1},...,t_{n}\bigr) \in [0,+\infty)^{n} 
$$}in place of function $f_{1}(\cdot)$ therein:

\begin{prop}\label{zxc}
Suppose that $\Re \lambda_{k}^{j} >0$ for all $k\in {\mathbb N}$ and $1\leq j\leq n,$ $1\leq s\leq n$ and $1\leq j_{1}<...<j_{s}\leq n$. If the sequence $( (\lambda_{k}^{1},...,\lambda_{k}^{n}))_{ k\in {\mathbb N}}$ is a uniqueness sequence for the multidimensional Laplace transform, then $((\lambda_{k}^{j_{1}},...,\lambda_{k}^{j_{s}}))_{k\in {\mathbb N}}$ is a uniqueness sequence for the multidimensional Laplace transform in ${\mathbb R}^{s}.$
\end{prop}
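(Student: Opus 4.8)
The plan is to run the converse part of the proof of Theorem \ref{tor} almost verbatim, lifting an $s$-dimensional test function to an $n$-dimensional one that is constant in the $n-s$ directions outside $\{j_{1},\dots,j_{s}\}$ and then invoking the hypothesis that $((\lambda_{k}^{1},\dots,\lambda_{k}^{n}))_{k\in\mathbb N}$ is a uniqueness sequence in $\mathbb R^{n}$. Concretely, suppose $f:[0,+\infty)^{s}\to X$ is locally integrable, that there are numbers $\tilde\omega_{1},\dots,\tilde\omega_{s}\in[-\infty,+\infty)$ for which the product of half-planes $\{\Re\mu_{i}>\tilde\omega_{i}\}$ is contained in $\Omega_{b}(f)\cap\Omega(f)$ with $\tilde\omega_{i}<\Re\lambda_{k}^{j_{i}}$ for all $k\in\mathbb N$ and $i\in\mathbb N_{s}$, and that $\hat f(\lambda_{k}^{j_{1}},\dots,\lambda_{k}^{j_{s}})=0$ for every $k\in\mathbb N$. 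I must deduce $f=0$ a.e.

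First I would form the $n$-dimensional function $f_{j_{1},\dots,j_{s}}$ introduced just before the statement, which integrates $f$ once in each of its $s$ variables and is constant in the remaining coordinates; exactly as the antiderivative $f^{[1]}$ in Theorem \ref{tor}, it is continuous. Setting $\omega_{j_{i}}:=\max(\tilde\omega_{i},0)$ for $i\in\mathbb N_{s}$ and $\omega_{i}:=0$ for $i\notin\{j_{1},\dots,j_{s}\}$, the growth estimate from \cite[Theorem 2.10(ii)]{mvlt} applied to $f$ yields a constant $M\geq 1$ with $|f_{j_{1},\dots,j_{s}}(t)|\leq M\exp(\omega_{j_{1}}t_{j_{1}}+\dots+\omega_{j_{s}}t_{j_{s}})$, so the region $\Omega:=\{\lambda_{1}\in\mathbb C:\Re\lambda_{1}>\omega_{1}\}\times\dots\times\{\lambda_{n}\in\mathbb C:\Re\lambda_{n}>\omega_{n}\}$ is contained in $\Omega_{b}(f_{j_{1},\dots,j_{s}})\cap\Omega(f_{j_{1},\dots,j_{s}})$. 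Because $\Re\lambda_{k}^{j_{i}}>0$ I also get $\omega_{j_{i}}<\Re\lambda_{k}^{j_{i}}$, while trivially $\omega_{i}=0<\Re\lambda_{k}^{i}$ in the remaining coordinates, so all admissibility requirements of Definition \ref{mila} are met for $f_{j_{1},\dots,j_{s}}$.

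Next I would compute the transform. Since $f_{j_{1},\dots,j_{s}}$ is constant in each coordinate $i\notin\{j_{1},\dots,j_{s}\}$, Fubini's theorem factors off $\int_{0}^{\infty}e^{-\lambda_{i}t_{i}}\,dt_{i}=1/\lambda_{i}$ for those coordinates, whereas in the coordinates $j_{1},\dots,j_{s}$ the extra integration contributes the factor $(\lambda_{j_{1}}\cdots\lambda_{j_{s}})^{-1}$ relative to $\hat f$; evaluating at $(\lambda_{k}^{1},\dots,\lambda_{k}^{n})$ this gives
$$
\widehat{f_{j_{1},\dots,j_{s}}}\bigl(\lambda_{k}^{1},\dots,\lambda_{k}^{n}\bigr)=\frac{\hat f\bigl(\lambda_{k}^{j_{1}},\dots,\lambda_{k}^{j_{s}}\bigr)}{\lambda_{k}^{1}\cdots\lambda_{k}^{n}}=0,\qquad k\in\mathbb N,
$$
because each $\lambda_{k}^{i}\neq 0$ and $\hat f(\lambda_{k}^{j_{1}},\dots,\lambda_{k}^{j_{s}})=0$ by hypothesis. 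Invoking that $((\lambda_{k}^{1},\dots,\lambda_{k}^{n}))_{k\in\mathbb N}$ is a uniqueness sequence then forces $f_{j_{1},\dots,j_{s}}=0$ a.e.\ on $[0,+\infty)^{n}$.

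Finally, since $f_{j_{1},\dots,j_{s}}$ depends only on $t_{j_{1}},\dots,t_{j_{s}}$ and is continuous, vanishing a.e.\ on $[0,+\infty)^{n}$ forces it to vanish identically as a function of those $s$ variables; differentiating its defining iterated integral (equivalently, uniqueness of the Lebesgue integral over boxes) yields $f=0$ a.e., as required. I expect the only genuinely delicate point to be the verification that $\Omega\subseteq\Omega_{b}(f_{j_{1},\dots,j_{s}})\cap\Omega(f_{j_{1},\dots,j_{s}})$ together with the growth bound; this is precisely where \cite[Theorem 2.10]{mvlt} enters, and one must check that the $n-s$ constant directions contribute only the convergent factors $1/\lambda_{i}$, which is guaranteed exactly by the standing hypothesis $\Re\lambda_{k}^{i}>0$.
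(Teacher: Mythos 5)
Your proposal is correct and takes essentially the same route as the paper: the paper's proof is precisely the sketched reduction to the converse part of Theorem \ref{tor}, using the very same auxiliary function $f_{j_{1},...,j_{s}}$ (constant in the coordinates outside $\{j_{1},...,j_{s}\}$), whose transform factors as $\hat{f}\bigl(\lambda_{k}^{j_{1}},...,\lambda_{k}^{j_{s}}\bigr)/\bigl(\lambda_{k}^{1}\cdots\lambda_{k}^{n}\bigr)$ so that the $n$-dimensional uniqueness hypothesis applies. You have simply filled in the admissibility, growth-bound and Fubini details that the paper leaves to the reader.
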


\begin{rem}\label{pot}
If $P : {\mathbb N}^{n} \rightarrow {\mathbb N}$ is a bijection, then it is clear that
$( \lambda_{k}=(\lambda_{k}^{1},...,\lambda_{k}^{n}))_{ k\in {\mathbb N}}$ is a uniqueness sequence for the multidimensional Laplace transform if and only if $((\lambda_{P(k_{1},...,k_{n})}^{1},...,\lambda_{P(k_{1},...,k_{n})}^{n}))_{  (k_{1},...,k_{n})\in {\mathbb N}^{n}}$ is a uniqueness sequence for the multidimensional Laplace transform.
\end{rem}

We feel it is our duty to emphasize that Theorem \ref{tor} and Proposition \ref{zxc} can be difficult for applications because the sequences $((\lambda_{k_{1}}^{1},...,\lambda_{k_{n}}^{n}))_{  (k_{1},...,k_{n})\in {\mathbb N}^{n}}$ and $((\lambda_{k}^{j_{1}},...,\lambda_{k}^{j_{s}}))_{k\in {\mathbb N}}$ form the rectangular lattices in ${\mathbb R}^{n}$  and ${\mathbb R}^{s},$ respectively. Some illustrative applications are given below (cf. also \cite{a43}):

\begin{example}\label{or}
\begin{itemize}
\item[(i)] If $a_{i}>0,$ $b_{i}>0$ and $0<\gamma_{i} \leq 1$ for $1\leq i\leq n,$ then $((a_{1}+k_{1}^{\gamma_{1}}b_{1},...,a_{n}+k_{n}^{\gamma_{n}}b_{n}))_{(k_{1},...,k_{n})\in {\mathbb N}^{n}}$ is a uniqueness sequence for the multidimensional Laplace transform. If there exists $i\in {\mathbb N}_{n}$ such that $\gamma_{i}>1,$ then the above sequence is not a uniqueness sequence for the multidimensional Laplace transform.
\item[(ii)] If $0<\gamma_{i} \leq 1/2$ for $1\leq i\leq n,$ then $((1+ik_{1}^{\gamma_{1}},...,1+ik_{n}^{\gamma_{n}}))_{(k_{1},...,k_{n})\in {\mathbb N}^{n}}$ is a uniqueness sequence for the multidimensional Laplace transform. If there exists $i\in {\mathbb N}_{n}$ such that $\gamma_{i}>1/2,$ then the above sequence is not a uniqueness sequence for the multidimensional Laplace transform.
\item[(iii)] Suppose that, for every $j\in {\mathbb N}_{n}$, $(\lambda_{k}^{j})_{k\in {\mathbb N}}$ is an infinite sequence without accumulation points such that $\Re \lambda_{k}^{j}\geq \delta>0,$ $k\in {\mathbb N}$ for some $\delta>0$, and
\begin{align}\label{retz}
\sum_{k=1}^{+\infty}\Biggl[ 1-\Biggl| \frac{  \lambda_{k}^{j}-1 }{  \lambda_{k}^{j}+1 }\Biggr|\Biggr]=+\infty.
\end{align}
Due to Theorem \ref{ibeer} and Theorem \ref{tor}, $((\lambda_{k_{1}}^{1},...,\lambda_{k_{n}}^{n}))_{  (k_{1},...,k_{n})\in {\mathbb N}^{n}}$ is a uniqueness sequence for the multidimensional Laplace transform. Conversely, if there exists an index  $j\in {\mathbb N}_{n}$ such that $(\lambda_{k}^{j})_{k\in {\mathbb N}}$ is a complex sequence without accumulation points, $\Re \lambda_{k}^{j}\geq \delta>0,$ $k\in {\mathbb N}$ for some $\delta>0$, the sum in \eqref{retz} is finite and $|\arg(\lambda_{k}^{j})|\leq \theta <\pi/2$, $k\in {\mathbb N}$, then Theorem \ref{ibeer} and and Theorem \ref{tor} imply that $((\lambda_{k_{1}}^{1},...,\lambda_{k_{n}}^{n}))_{  (k_{1},...,k_{n})\in {\mathbb N}^{n}}$ is not a uniqueness sequence for the multidimensional Laplace transform.
\end{itemize}
\end{example}

If $(\lambda_{k}^{1})_{k\in {\mathbb N}},...,\ (\lambda_{k}^{n})_{k\in {\mathbb N}}$ are uniqueness sequences for one-dimensional Laplace transform, then $((\lambda_{k}^{1},...,\lambda_{k}^{n}))_{  k\in {\mathbb N}}$ need not be uniqueness sequence for multidimensional Laplace transform (cf. Theorem \ref{tor}):

\begin{example}\label{oro}
The sequences $(\lambda_{k}^{1})_{k\in {\mathbb N}}$ and $(\lambda_{k}^{2})_{k\in {\mathbb N}}$, where $\lambda_{k}^{1}:=\lambda_{k}^{2}:=k,$ $k\in {\mathbb N}$, are the uniqueness sequences for one-dimensional Laplace transform. But, the sequence $((k,k))_{k\in {\mathbb N}}$ is not a uniqueness sequence for the two-dimensional Laplace transform since the function $f: [0,+\infty)^{2} \rightarrow {\mathbb R},$ given by $f(t_{1},t_{2}):=g_{2}(t_{1}) \cdot g_{3}(t_{2})-g_{3}(t_{1}) \cdot g_{2}(t_{2})$, $t_{1}\geq 0,$ $t_{2}\geq 0$ satisfies $F(\lambda_{1},\lambda_{2})=(\lambda_{1}-\lambda_{2})/(\lambda_{1}^{3}\cdot \lambda_{2}^{3}),$ $\Re \lambda_{1}>0,$ $\Re \lambda_{2}>0$ and therefore $F(k,k)=0$ for all $k\in {\mathbb N}.$ 

Furthermore, if $m,\ s\in {\mathbb N}$, $c_{j}\in {\mathbb N}$ for $1\leq  j\leq m$ and $d_{j}\in {\mathbb N}$ for $1\leq  j\leq s$, then the inverse Laplace transform $f(\cdot,\cdot)$ of function
$$     
F\bigl(\lambda_{1},\lambda_{2}\bigr):=\frac{\lambda_{1}-1}{\lambda_{1}^{3}\cdot \lambda_{2}^{3}}\cdot \frac{\lambda_{2}-1}{\lambda_{1}^{3}\cdot \lambda_{2}^{3}}\cdot \prod_{j=1}^{m}\frac{\lambda_{1}-c_{j}\lambda_{2}}{\lambda_{1}^{3}\cdot \lambda_{2}^{3}}\cdot \prod_{j=1}^{s}\frac{d_{j}\lambda_{1}-\lambda_{2}}{\lambda_{1}^{3}\cdot \lambda_{2}^{3}},\quad \Re \lambda_{1}>0,\ \Re \lambda_{2}>0
$$
is continuous and non-trivial, so that the sequence 
$$
\{(k,1) : k\in {\mathbb N}\} \cup \{(1,k) : k\in {\mathbb N}\} \bigcup_{j=1}^{m}\Bigl\{ \bigl(c_{j}k,k\bigr) : k\in {\mathbb N}\Bigr\} \bigcup_{j=1}^{s}\Bigl\{ \bigl(k,d_{j}k\bigr) : k\in {\mathbb N}\Bigr\}
$$ 
is not a uniqueness sequence for the double Laplace transform.

It is logical to ask whether the sequence $\{ (k_{1},k_{2}) : (k_{1},k_{2})\in {\mathbb N}^{2},\ \arg(k_{1}+ik_{2}) \leq \theta \}$, where $0<\theta <\pi/2,$ is a uniqueness sequence for the double Laplace transform. The answer is affirmative and we will first prove this statement in case $\pi/4 \leq \theta <\pi/2.$ Suppose that a locally integrable function $f : [0,+\infty)^{2} \rightarrow {\mathbb C}$  
satisfies that there exist real numbers $\omega_{1}<1$ and $\omega_{2}<1$ such that $\{ \lambda_{1} \in {\mathbb C} : \Re \lambda_{1}>\omega_{1}\} \times \{\lambda_{2} \in {\mathbb C} : \Re \lambda_{2}>\omega_{2} \}\subseteq \Omega(f) \cap \Omega_{b}(f),$ and
$$
\int^{+\infty}_{0}\int^{+\infty}_{0}e^{-k_{1}t_{1}-k_{2}t_{2}}f\bigl( t_{1},t_{2}\bigr)\, dt_{1}\, dt_{2}=0\quad \mbox{ for all } \bigl(k_{1},k_{2}\bigr)\in {\mathbb N}^{2}\mbox{ with }k_{1}\geq k_{2}.
$$
Define 
$$
f^{[1]}(t):=\int^{t_{1}}_{0}\int^{t_{2}}_{0}f\bigl( s_{1},s_{2}\bigr)\, ds_{1}\, ds_{2}\mbox{ and }g\bigl(t_{1},t_{2}\bigr):=f^{[1]}(t_{2},t_{1}),
$$ 
for any $(t_{1},t_{2})\in  [0,+\infty)^{2}.$
Due to \cite[Theorem 2.10(ii)-(iii)]{mvlt}, there exists a real number $M\geq 1$ such that $|f^{[1]}(t_{1},t_{2})|+|g(t_{1},t_{2})| \leq Me^{t_{1}+t_{2}}$, $(t_{1},t_{2})\in  [0,+\infty)^{2},$ and
$$
\int^{+\infty}_{0}\int^{+\infty}_{0}e^{-k_{2}t_{1}-k_{1}t_{2}}g\bigl( t_{1},t_{2}\bigr)\, dt_{1}\, dt_{2}=0\quad \mbox{ for all } \bigl(k_{1},k_{2}\bigr)\in {\mathbb N}^{2}\mbox{ with }k_{1}\geq k_{2}.
$$ 
Set now $h:= f^{[1]} \ast_{0}g. $ Then there exists a real number $M'\geq 1$ such that $|h(t_{1},t_{2})| \leq M'e^{t_{1}+t_{2}}$, $(t_{1},t_{2})\in  [0,+\infty)^{2}$ and $\hat{h}(k_{1},k_{2})=0$ for all $(k_{1},k_{2})\in {\mathbb N}^{2}.$ This simply implies $f^{[1]}\equiv g\equiv 0$ and therefore $f=0$ a.e., as required.

Now we will solve the problem for an angle $0<\theta <\pi/4.$ Define the sequence $(\theta_{k})_{k\in {\mathbb N}}$ recursively by $\theta_{1}:=\arctan (1/2)$ and $\theta_{k+1}:=\arctan (\theta_{k}/2),$ $k\in {\mathbb N}.$ Then $0<\theta_{k}<2^{-k},$ $k\in {\mathbb N}$ and we only need to prove that, for every $k\in {\mathbb N},$ $\{ (k_{1},k_{2}) : (k_{1},k_{2})\in {\mathbb N}^{2},\ \arg(k_{1}+ik_{2}) \leq \theta_{k} \}$ is a uniqueness sequence for the double Laplace transform. The main problem is to prove the validity of the above statement for the angle $\theta_{1}=\arctan (1/2)$, since we can inductively repeat the same procedure infinitely many times. Fist of all, let $c_{1}>0,$ $c_{2}>0$ and let us consider the function $f_{c_{1},c_{2}} : [0,+\infty)^{2} \rightarrow {\mathbb C}$ defined by $f_{c_{1},c_{2}}(t_{1},t_{2}):=f(c_{1}t_{1},c_{2}t_{2}),$ $(t_{1},t_{2})\in [0,+\infty)^{2}.$ Then $\widehat{f_{c_{1},c_{2}}}(\lambda_{1},\lambda_{2})=\hat{f}(\lambda_{1}/c_{1},\lambda_{2}/c_{2})$ and the assumption $\hat{f}(k_{1},k_{2})=0$ for all $(k_{1},k_{2})\in {\mathbb N}^{2}$ with $\arg(k_{1}+ik_{2}) \leq  \arctan (1/2)$ implies that there exists a continuous Laplace transformable function $f_{0} : [0,+\infty)^{2} \rightarrow {\mathbb C}$ such that the Laplace transform of function $f_{0}\ast_{0} f \ast_{0}f_{1,2}(\cdot,\cdot)$ annulates all points $(k_{1},k_{2})\in {\mathbb N}^{2}$ with $\arg(k_{1}+ik_{2}) \leq  \arctan (1/2)$, as well as all points $(k_{1},k_{2})\in {\mathbb N}^{2}$ such that $\arctan (1/2)<\arg(k_{1}+ik_{2}) \leq  \pi/4$ and $k_{2}$ is an even number. Arguing as in the corresponding part of proof with $\theta =\pi/4,$ it follows that there exists a continuous Laplace transformable function $g : [0,+\infty)^{2} \rightarrow {\mathbb C}$ such that the Laplace transform of function $f_{0}\ast_{0} f \ast_{0}f_{1,2} \ast_{0} g (\cdot,\cdot)$ annulates all points $(k_{1},k_{2})\in {\mathbb N}^{2}$ with $\arg(k_{1}+ik_{2}) \leq  \arctan (1/2)$, all points $(k_{1},k_{2})\in {\mathbb N}^{2}$ such that $\arctan (1/2)<\arg(k_{1}+ik_{2}) <  (\pi/4)+\arctan (1/2)$ and $k_{2}$ is an even number, as well as all points 
$(k_{1},k_{2})\in {\mathbb N}^{2}$ such that $(\pi/4)+\arctan (1/2) \leq \arg(k_{1}+ik_{2}) <  (\pi/2).$ Now it can be easily explained that 
the Laplace transform of function $[(f_{0}\ast_{0} f \ast_{0}f_{1,2} \ast_{0} g) \ast (f_{0}\ast_{0} f \ast_{0}f_{1,2} \ast_{0} g) _{1,1/2}](\cdot,\cdot)$ annulates all points $(k_{1},k_{2})\in {\mathbb N}^{2}$, which simply implies that $f=0$ a.e., as required.
\end{example}

By Theorem \ref{tor} and Proposition \ref{zxc}, any
uniqueness sequence $((\lambda_{k}^{1},...,\lambda_{k}^{n}))_{k\in {\mathbb N}}$ for the multidimensional Laplace transform which satisfies $\Re \lambda_{k}^{j} >0$ for all $k\in {\mathbb N}$ and $j\in {\mathbb N}_{n}$ 
is a subsequence of the uniqueness sequence for the multidimensional Laplace transform $((\lambda_{k_{1}}^{1},...,\lambda_{k_{n}}^{n}))_{  (k_{1},...,k_{n})\in {\mathbb N}^{n}}$, where $(\lambda_{k}^{1})_{k\in {\mathbb N}},...,$ and $ (\lambda_{k}^{n})_{k\in {\mathbb N}}$ are the uniqueness sequences for Laplace transform.

Suppose now that $\lambda_{k}^{j}\geq 1$ for all $k\in {\mathbb N}$ and $j\in {\mathbb N}_{n}.$ Since the mapping $\Phi : L^{1}((0,1)^{n}) \rightarrow L^{1}((0,+\infty)^{n})$, given by
$
[\Phi (g)](t_{1},...,t_{n}):=\exp(-t_{1}-...-t_{n})g( \exp(-t_{1}),...,\exp(-t_{n})),$ $(t_{1},...,t_{n}) \in [0,+\infty)^{n} ,
$
$ g\in L^{1}((0,1)^{n}),$ is an isometric isomorphism which maps 
$$ 
t_{1}^{ \lambda_{k_{1}}^{1}-1}\cdot ... \cdot t_{n}^{ \lambda_{k_{n}}^{n}-1}\mbox{ to }e^{-\lambda_{k_{1}}^{1}t_{1} -...-\lambda_{k_{n}}^{n}t_{n}}\mbox{ for all }(k_{1},...,k_{n})\in  {\mathbb N}^{n},
$$
Proposition \ref{total} implies that $((\lambda_{k_{1}}^{1},...,\lambda_{k_{n}}^{n}))_{  (k_{1},...,k_{n})\in S},$ where $\emptyset \neq S\subseteq {\mathbb N}^{n},$ will be a uniqueness sequence for multidimensional Laplace transform if the set 
\begin{align}\label{wws}
{\mathcal P}:=\Biggl\{t_{1}^{ \lambda_{k_{1}}^{1}-1}\cdot ... \cdot t_{n}^{ \lambda_{k_{n}}^{n}-1} : (k_{1},...,k_{n})\in  S\Biggr\}
\end{align}
is total in $C([0,1]^{n}).$ The Stone-Weierstrass theorem yields that the set $
{\mathcal P}$ is total in $C([0,1]^{n})$ if the following three conditions hold:
\begin{itemize}
\item[(i)] There exists $ (k_{1},...,k_{n})\in  S$ such that $\lambda_{k_{1}}^{1}=...=\lambda_{k_{n}}^{n}=1.$
\item[(ii)] For every $j\in {\mathbb N}_{n}$, there exists $ (k_{1},...,k_{n})\in  S$ such that $\lambda_{k_{j}}^{j}\neq 1$ and $\lambda_{k_{s}}^{s}=1$ for all $s\in {\mathbb N}_{n}\setminus \{j\}.$
\item[(iii)] For every $ (k_{1},...,k_{n})\in  S$ and $ (k_{1}',...,k_{n}')\in  S,$ there exists $ (k_{1}'',...,k_{n}'')\in  S$ such that 
$$
\Bigl(\lambda_{k_{1}}^{1},...,\lambda_{k_{n}}^{n}\Bigr)+\Bigl(\lambda_{k_{1'}}^{1},...,\lambda_{k_{n'}}^{n}\Bigr)=\Bigl(\lambda_{k_{1}''}^{1},...,\lambda_{k_{n}''}^{n}\Bigr)+\Bigl(1,...,1\Bigr).
$$
\end{itemize}
We need the last condition since the linear span of ${\mathcal P}$ has to be closed under pointwise multiplication of functions. But, then a relatively simple analysis shows that the sequence $((\lambda_{k_{1}}^{1},...,\lambda_{k_{n}}^{n}))_{  (k_{1},...,k_{n})\in S} $ must contain a subsequence of the form $((a_{1}+k_{1}b_{1},...,a_{n}+k_{n}b_{n}))_{ (k_{1},...,k_{n})\in {\mathbb N}_{0}^{n}}$, for certain real numbers $a_{i}>$ and $b_{i}>0$ for $1\leq i\leq n$, which is quite unsatisfactory (cf. Proposition \ref{gade}).

On the other hand, we can employ the multivariate M\"untz theorem mentioned in the introductory part of paper to obtain a sufficient condition under which the set $
{\mathcal P}$ is total in $C([0,1]^{n}).$ Now we will formulate and prove the following partial solution to Problem 1 (cf. also \cite[Satz $5^{32}$]{dech}):

\begin{thm}\label{kunja}
Suppose that $\emptyset \neq S\subseteq {\mathbb N}^{n} $ and $\Re \lambda_{k}^{j}\geq 1$ for all $k\in {\mathbb N}$ and $j\in {\mathbb N}_{n}.$
Then $((\lambda_{k_{1}}^{1},...,\lambda_{k_{n}}^{n}))_{  (k_{1},...,k_{n})\in S} $ is a uniqueness sequence for multidimensional Laplace transform if the following conditions hold:
\begin{itemize}
\item[(i)]  $\inf\bigl\{ |\Re\lambda_{k_{1}}^{1}-\Re\lambda_{k_{1}'}^{1}|+...|\Re\lambda_{k_{n}}^{n}-\Re\lambda_{k_{n}'}^{n}|:  (k_{1},...,k_{n})\in  S,\ (k_{1}',...,k_{n}')\in  S \bigr\} >0.$
\item[(ii)] $\limsup_{t\rightarrow +\infty}\frac{\text{card}\bigl(\bigl\{(k_{1},...,k_{n})\in S: |\Re\lambda_{k_{1}}^{1}-1| +...+|\Re\lambda_{k_{n}}^{n}-1| \leq t\bigr\}\bigr)}{t^{n}}>0.$
\item[(iii)] For every $j\in {\mathbb N}_{n},$ there exists a real constant $c_{j}\in {\mathbb R}$ such that $\Im \lambda_{k_{j}}^{j}=c_{j}$ for all $(k_{1},...,k_{n})\in  S.$  
\end{itemize}
\end{thm}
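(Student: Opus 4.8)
The plan is to reduce the assertion to a totality statement in $L^{1}((0,1)^{n})$ and then invoke the multivariate M\"untz theorem of Ronkin. Since $\Re \lambda_{k}^{j}\geq 1$, each monomial $t_{1}^{\lambda_{k_{1}}^{1}-1}\cdots t_{n}^{\lambda_{k_{n}}^{n}-1}$ lies in $L^{1}((0,1)^{n})$, and the isometric isomorphism $\Phi : L^{1}((0,1)^{n}) \rightarrow L^{1}((0,+\infty)^{n})$ recalled before the statement carries the set $\mathcal P$ from \eqref{wws} onto $\{e^{-\lambda_{k_{1}}^{1}t_{1}-\cdots-\lambda_{k_{n}}^{n}t_{n}} : (k_{1},...,k_{n})\in S\}$. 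Hence, by Proposition \ref{total}, it suffices to prove that $\mathcal P$ is total in $L^{1}((0,1)^{n})$. I would stress that one must argue in $L^{1}$ rather than in $C([0,1]^{n})$, because for $\Im \lambda_{k_{j}}^{j}\neq 0$ the elements of $\mathcal P$ fail to be continuous at the origin.

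First I would use condition (iii) to separate the real and imaginary parts of the exponents. Writing $\mu_{k_{j}}^{j}:=\Re \lambda_{k_{j}}^{j}$ and using $\Im \lambda_{k_{j}}^{j}=c_{j}$, we have $t_{j}^{\lambda_{k_{j}}^{j}-1}=t_{j}^{\mu_{k_{j}}^{j}-1}\cdot t_{j}^{ic_{j}}$ on $(0,1)$, so that every function in $\mathcal P$ is the product of the fixed modulus-one function $m(t_{1},...,t_{n}):=t_{1}^{ic_{1}}\cdots t_{n}^{ic_{n}}$ with a purely real-exponent monomial. Since $|m|\equiv 1$, the multiplication operator $g\mapsto m\cdot g$ is an isometric isomorphism of $L^{1}((0,1)^{n})$, and isometric isomorphisms preserve totality; consequently $\mathcal P$ is total in $L^{1}((0,1)^{n})$ if and only if the real-exponent system
\begin{align*}
\mathcal P':=\Bigl\{ t_{1}^{\mu_{k_{1}}^{1}-1}\cdots t_{n}^{\mu_{k_{n}}^{n}-1} : (k_{1},...,k_{n})\in S\Bigr\}
\end{align*}
is total in $L^{1}((0,1)^{n})$. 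This reduces the problem to a genuine multivariate M\"untz question for the nonnegative real exponent points $(\mu_{k_{1}}^{1}-1,...,\mu_{k_{n}}^{n}-1)$, $(k_{1},...,k_{n})\in S$.

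At this stage I would apply \cite[Theorem 1]{ronkin}. Condition (i) asserts precisely that the exponent points $\{(\mu_{k_{1}}^{1},...,\mu_{k_{n}}^{n}) : (k_{1},...,k_{n})\in S\}$ are uniformly separated in the $\ell^{1}$-metric (in particular, pairwise distinct), while condition (ii), in which $|\Re\lambda_{k_{j}}^{j}-1|=\mu_{k_{j}}^{j}-1$ because $\Re\lambda_{k_{j}}^{j}\geq 1$, asserts that this point set has strictly positive upper density relative to $t^{n}$. These are exactly the separation and density hypotheses under which Ronkin's multivariate M\"untz theorem guarantees totality of the associated monomial system in $C([0,1]^{n})$, and therefore in $L^{1}((0,1)^{n})$. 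Invoking it yields the totality of $\mathcal P'$, whence that of $\mathcal P$, and the conclusion follows from Proposition \ref{total} and the isometric isomorphism $\Phi$ as above.

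The step I expect to be the main obstacle is the careful verification that conditions (i)--(iii) translate into exactly the hypotheses required by Ronkin's theorem, together with the bookkeeping needed to pass between totality in $C([0,1]^{n})$ (the natural conclusion of the M\"untz theorem) and totality in $L^{1}((0,1)^{n})$ (what Proposition \ref{total} requires); the latter passage is unavoidable precisely because of the oscillatory factor $m$ introduced by the nonzero imaginary parts. Everything else is a routine transport of the one-variable reasoning already used in Proposition \ref{gade} and in the discussion preceding the statement.
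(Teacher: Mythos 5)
Your proof is correct and follows essentially the same route as the paper's: condition (iii) strips the common imaginary parts, Ronkin's multivariate M\"untz theorem is applied via (i)--(ii) to the real-exponent system, and the conclusion comes from Proposition \ref{total} together with the isometric isomorphism $\Phi$. The only (cosmetic) difference is that the paper places the unimodular factor on the function side, deducing that the real-part sequence is a uniqueness sequence and applying this to $e^{-ic_{1}t_{1}-\cdots-ic_{n}t_{n}}f(t_{1},\ldots,t_{n})$, whereas you place it on the monomial side, multiplying by $t_{1}^{ic_{1}}\cdots t_{n}^{ic_{n}}$ and verifying totality of the complex-exponent system directly in $L^{1}((0,1)^{n})$ --- a dual formulation of the same step.
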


\begin{proof}
Suppose that a locally integrable function $f : [0,+\infty)^{n}\rightarrow {\mathbb C}$ satisfies
$$
\int^{+\infty}_{0}...\int^{+\infty}_{0}e^{-(\Re \lambda_{k_{1}}^{1}+i \Im \lambda_{k_{1}}^{1})t_{1}-...-(\Re \lambda_{k_{n}}^{n}+i \Im \lambda_{k_{n}}^{n})t_{n}}f\bigl(t_{1},...,t_{n}\bigr)\, dt_{1}\, ...\, dt_{n}=0,
$$
for any $(k_{1},...,k_{n})\in  S.$ Then the assumption (iii) yields
$$
\int^{+\infty}_{0}...\int^{+\infty}_{0}e^{-\Re \lambda_{k_{1}}^{1}t_{1}-...-\Re \lambda_{k_{n}}^{n}t_{n}}\Bigl[ e^{-ic_{1}t_{1}-...-ic_{n}t_{n}} f\bigl(t_{1},...,t_{n}\bigr)\Bigr]\, dt_{1}\, ...\, dt_{n}=0,
$$
for any $(k_{1},...,k_{n})\in  S.$ Keeping in mind (i)-(ii), we can apply the above-mentioned multivariate  M\"untz type theorem to obtain that the set $
{\mathcal P}_{1},$ obtained by replacing the numbers $\lambda_{k_{1}}^{1},....,\ \lambda_{k_{n}}^{n}$with the numbers $\Re \lambda_{k_{1}}^{1},....,\ \Re \lambda_{k_{n}}^{n}$ in \eqref{wws}, respectively,
is total in $C([0,1]^{n})$. Now Proposition \ref{total} implies that  $e^{-ic_{1}t_{1}-...-ic_{n}t_{n}} f(t_{1},...,t_{n})=0$ for a.e. $(t_{1},...,t_{n})\in [0,+\infty)^{n},$ which simply completes the proof.
\end{proof}

The notion of an $n$-dimensional M\"untz sequence was introduced in \cite[Definition 1]{kroo}. Unfortunately, we cannot employ here the multivariate M\"untz theorem established in \cite[Theorem A]{kroo} since it is completely devoted to the study of totality of set $
{\mathcal P}$
 in $C(I),$ where $I $ is a strictly positive domain, i.e., $I\subseteq (0,+\infty)^{n}.$

\subsection{Subordination principles for uniqueness sequences}\label{subasub}

We open this subsection by stating the following result (in the proof, we use the Wright function $\Phi_{\gamma}(\cdot),$ where $0<\gamma<1;$ cf. \cite{FKP} for the notion): 

\begin{thm}\label{ressuba}
Suppose that $1\leq l\leq n,$ $1\leq j\leq {n \choose l},$ $D_{l,j}=\{j_{1},...,j_{l}\}$ is a fixed subset of ${\mathbb N}_{n},$ where $1\leq j_{1}<...<j_{l}\leq n,$ $\gamma_{j_{i}} \in (0,1)$ for all $i\in {\mathbb N}_{l},$ and $\gamma_{l,j}=(\gamma_{j_{1}},...,\gamma_{j_{l}}).$ Suppose that $((\lambda_{k}^{1},...,\lambda_{k}^{n}))_{k\in {\mathbb N}}$ is a uniqueness sequence for the multidimensional Laplace transform, 
$\Re \lambda_{k}^{j} >0$ for all $k\in {\mathbb N}$ and $1\leq j\leq n,$ and $
\lim_{k\rightarrow +\infty}\Re \lambda_{k}^{j_{s}}=0,$ $ 1\leq s\leq l.$
Then the sequence{\small
\begin{align*}
\Biggl(\lambda_{k}^{1},...,\lambda_{k}^{j_{1}-1},\bigl(\lambda_{k}^{j_{1}}\bigr)^{\gamma_{j_{1}}},\lambda_{k}^{j_{1}+1},...,\lambda_{k}^{j_{2}-1},\bigl(\lambda_{k}^{j_{2}}\bigr)^{\gamma_{j_{2}}},\lambda_{k}^{j_{2}+1},...,\lambda_{k}^{j_{l}-1}, \bigl(\lambda_{k}^{j_{l}}\bigr)^{\gamma_{j_{l}}},\lambda_{k}^{j_{l}+1},...,\lambda_{k}^{n} \Biggr)_{k\in {\mathbb N}}
\end{align*}}
 is a uniqueness sequence for the multidimensional Laplace transform.
\end{thm}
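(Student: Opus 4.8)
The plan is to transfer the vanishing hypothesis on the subordinated sequence back to the \emph{original} sequence by means of the Wright-function subordination, and then to invoke the assumed uniqueness of $((\lambda_{k}^{1},\dots,\lambda_{k}^{n}))_{k\in\mathbb N}$. Concretely, let $f:[0,+\infty)^{n}\to\mathbb C$ be locally integrable, admit a region of convergence containing a product of half-planes as in Definition \ref{mila}, and satisfy $\hat f(\lambda_{k}^{1},\dots,(\lambda_{k}^{j_{1}})^{\gamma_{j_{1}}},\dots,(\lambda_{k}^{j_{l}})^{\gamma_{j_{l}}},\dots,\lambda_{k}^{n})=0$ for all $k$. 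As in the proofs of Proposition \ref{gade}(ii) and Theorem \ref{tor}, I would first pass to the full antiderivative $G$ defined in \eqref{kw}: by \cite[Theorem 2.10(ii)-(iii)]{mvlt} the function $G$ is bounded by $M\exp(\omega_{1}t_{1}+\dots+\omega_{n}t_{n})$, one has $\hat G(\lambda)=\hat f(\lambda)/(\lambda_{1}\cdots\lambda_{n})$, so $\hat G$ inherits the same vanishing pattern, and the hypothesis $\lim_{k\to+\infty}\Re\lambda_{k}^{j_{s}}=0$ is exactly what forces the abscissae $\omega_{j_{s}}$ to be taken equal to $0$, so that $G$ has abscissa $\le 0$ in each subordinated variable.

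The central device is the scaled Wright kernel. For $0<\gamma<1$ one has the subordination identity
\[
\int_{0}^{+\infty}e^{-\mu t}\,t^{-\gamma}\Phi_{\gamma}\!\bigl(r\,t^{-\gamma}\bigr)\,dt=\mu^{\gamma-1}e^{-r\mu^{\gamma}},\qquad \Re\mu>0,
\]
so that, defining the partial subordination operator in the $j_{s}$-th variable by
\[
(T_{j_{s}}u)(\dots,t_{j_{s}},\dots):=\int_{0}^{+\infty}t_{j_{s}}^{-\gamma_{j_{s}}}\Phi_{\gamma_{j_{s}}}\!\bigl(r\,t_{j_{s}}^{-\gamma_{j_{s}}}\bigr)\,u(\dots,r,\dots)\,dr
\]
and $g:=T_{j_{1}}\cdots T_{j_{l}}G$ (these operators act on distinct variables, hence commute), an application of Fubini's theorem gives
\[
\hat g(\mu_{1},\dots,\mu_{n})=\Bigl[\textstyle\prod_{s=1}^{l}\mu_{j_{s}}^{\gamma_{j_{s}}-1}\Bigr]\,\hat G\bigl(\mu_{1},\dots,\mu_{j_{1}}^{\gamma_{j_{1}}},\dots,\mu_{j_{l}}^{\gamma_{j_{l}}},\dots,\mu_{n}\bigr)
\]
on a suitable product of half-planes. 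Evaluating at $\mu=(\lambda_{k}^{1},\dots,\lambda_{k}^{n})$ and using that $\Re\mu>0$ implies $\Re\mu^{\gamma}>0$, the argument of $\hat G$ is precisely the $k$-th point of the subordinated sequence, whence $\hat g(\lambda_{k}^{1},\dots,\lambda_{k}^{n})=0$ for all $k$.

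It then remains to check that $g$ itself meets the hypotheses of Definition \ref{mila} relative to the original sequence, and to invert. Since $G$ is bounded and of abscissa $\le0$ in the variables $j_{1},\dots,j_{l}$, the implication $\Re\mu>0\Rightarrow\Re\mu^{\gamma}>0$ shows that $\hat g$ converges on the half-plane $\Re\mu_{j_{s}}>0$, and the requirement $\omega_{j_{s}}<\Re\lambda_{k}^{j_{s}}$ of the definition is met with $\omega_{j_{s}}=0$ precisely because $\Re\lambda_{k}^{j_{s}}>0$ while $\lim_{k}\Re\lambda_{k}^{j_{s}}=0$. Consequently the assumed uniqueness of $((\lambda_{k}^{1},\dots,\lambda_{k}^{n}))_{k\in\mathbb N}$ yields $g\equiv0$, hence $\hat g\equiv0$ on its region of convergence. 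Because each prefactor $\mu_{j_{s}}^{\gamma_{j_{s}}-1}$ is nonvanishing and the maps $\mu_{j_{s}}\mapsto\mu_{j_{s}}^{\gamma_{j_{s}}}$ carry a half-plane onto an (open) sector, $\hat G$ vanishes on a nonempty open subset of its domain of holomorphy; the identity theorem for holomorphic functions of several variables then forces $\hat G\equiv0$, and the uniqueness of the Laplace transform gives $G\equiv0$, i.e. $f=0$ a.e.

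The main obstacle is the rigorous abscissa and convergence bookkeeping around the Wright kernel: one must justify the interchange of integrals defining $\hat g$, verify that the composed operator $T_{j_{1}}\cdots T_{j_{l}}$ produces a locally integrable $g$ whose region of convergence is an honest product of half-planes of the form demanded by Definition \ref{mila}, and --- most delicately --- confirm that the subordinated variables contribute abscissa exactly $0$. This last point is where the hypothesis $\lim_{k\to+\infty}\Re\lambda_{k}^{j_{s}}=0$ is indispensable: without it the subordinated function could have a strictly positive abscissa in some variable $j_{s}$, and the half-plane $\Re\mu_{j_{s}}>0$ needed to apply the uniqueness of the original sequence would fall outside the region of convergence of $\hat g$, breaking the argument. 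The super-exponential decay of $\Phi_{\gamma_{j_{s}}}$ is what keeps all these integrals finite and underlies the convergence estimates.
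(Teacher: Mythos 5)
Your proposal is essentially the paper's own proof: you pass to the antiderivative $G$ of \eqref{kw}, apply the Wright-function subordination in the variables $j_{1},\dots,j_{l}$ (your kernel form $t_{j_{s}}^{-\gamma_{j_{s}}}\Phi_{\gamma_{j_{s}}}(r\,t_{j_{s}}^{-\gamma_{j_{s}}})$ is exactly the paper's \eqref{ubicaprim} after the change of variables $r=s_{j_{s}}t_{j_{s}}^{\gamma_{j_{s}}}$), use the same transform identity \eqref{akjot} (which the paper cites from \cite[Theorem 2.2]{new} rather than rederiving via Fubini), and conclude in the same way from the assumed uniqueness of $((\lambda_{k}^{1},\dots,\lambda_{k}^{n}))_{k\in\mathbb N}$ followed by holomorphy/uniqueness of the Laplace transform. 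The only differences are presentational, including your treatment of the hypothesis $\lim_{k\to+\infty}\Re\lambda_{k}^{j_{s}}=0$, which plays the same role (guaranteeing the evaluation points lie in the validity region of \eqref{akjot}) as in the paper's argument.
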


\begin{proof}
Suppose that $f\in L_{loc}^{1}([0,+\infty)^{n}),$ there exist numbers $\omega_{1}' \in [-\infty,+\infty),...,$ $ \omega_{n}'\in [-\infty,+\infty)$ such that $\{\lambda_{1}\in {\mathbb C} : \Re \lambda_{1}>\omega_{1}'\} \times ... \times \{\lambda_{n}\in {\mathbb C} : \Re \lambda_{n}>\omega_{n}'\} \subseteq \Omega_{b}(f) \cap \Omega(f),$ $\omega_{j}'<\Re \lambda_{k}^{j}$ for all $k\in {\mathbb N},$ $j\in {\mathbb N}_{n} \setminus D_{l,j}$, $\omega_{j}'<\Re [(\lambda_{k}^{j})^{\gamma_{j}}]$ for all $k\in {\mathbb N}$, $j\in D_{l,j}$ and {\scriptsize
\begin{align*} \hat{f}
\Biggl(\lambda_{k}^{1},...,\lambda_{k}^{j_{1}-1},\bigl(\lambda_{k}^{j_{1}}\bigr)^{\gamma_{j_{1}}},\lambda_{k}^{j_{1}+1},...,\lambda_{k}^{j_{2}-1},\bigl(\lambda_{k}^{j_{2}}\bigr)^{\gamma_{j_{2}}},\lambda_{k}^{j_{2}+1},...,\lambda_{k}^{j_{l}-1}, \bigl(\lambda_{k}^{j_{l}}\bigr)^{\gamma_{j_{l}}},\lambda_{k}^{j_{l}+1},...,\lambda_{k}^{n} \Biggr)=0,
\end{align*}}for all $k\in {\mathbb N}.$  Define $G(\cdot)$ through \eqref{kw}.  
By \cite[Theorem 2.10(ii)-(iii)]{mvlt}, there exists a real number $M\geq 1$ such that $|G(t_{1},...,t_{n})| \leq M\exp(\omega_{1}t_{1}+...+\omega_{n}t_{n}),$ $ (t_{1},...,t_{n} ) \in [0,+\infty)^{n},$ where $\omega_{j}:=\max(\omega_{j}',0)$ for $1\leq j\leq n,$ and{\scriptsize
\begin{align}\label{profs} \hat{G}
\Biggl(\lambda_{k}^{1},...,\lambda_{k}^{j_{1}-1},\bigl(\lambda_{k}^{j_{1}}\bigr)^{\gamma_{j_{1}}},\lambda_{k}^{j_{1}+1},...,\lambda_{k}^{j_{2}-1},\bigl(\lambda_{k}^{j_{2}}\bigr)^{\gamma_{j_{2}}},\lambda_{k}^{j_{2}+1},...,\lambda_{k}^{j_{l}-1}, \bigl(\lambda_{k}^{j_{l}}\bigr)^{\gamma_{j_{l}}},\lambda_{k}^{j_{l}+1},...,\lambda_{k}^{n} \Biggr)=0,
\end{align}}for all $k\in {\mathbb N}.$
Set now{\footnotesize
\begin{align}\notag &
G_{\gamma_{l,j}}\bigl(t_{1},...,t_{n}\bigr):=\int^{+\infty}_{0}...\int^{+\infty}_{0} \Phi_{\gamma_{j_{1}}}\bigl(s_{j_{1}}\bigr)\cdot ... \cdot \Phi_{\gamma_{j_{l}}}\bigl(s_{j_{l}}\bigr) 
\\& \label{ubicaprim} \times G\Bigl(t_{1},...,t_{j_{1}-1},s_{j_{1}}t_{j_{1}}^{\gamma_{j_{1}}},t_{j_{1}+1},...,t_{j_{2}-1},s_{j_{2}}t_{j_{2}}^{\gamma_{j_{2}}},t_{j_{2}+1},...,t_{j_{l}-1}, s_{j_{l}}t_{j_{l}}^{\gamma_{j_{l}}},t_{j_{l}+1},...,t_{n}\Bigr)\, ds_{j_{1}}...\, ds_{j_{l}},
\end{align}}for any $t=(t_{1},...,t_{n})\in [0,+\infty)^{n}.$ By \cite[Theorem 2.2]{new}, we know the following: If $\lambda_{i} \in {\mathbb C}$ and $\Re (\lambda_{i})>\omega_{i}^{1/\gamma_{i}}$ for all $i\in D_{l,j}$ and $\Re (\lambda_{i})>\omega_{i}$ for all $i\in {\mathbb N}_{n} \setminus D_{l,j}$, then $\lambda=(\lambda_{1},...,\lambda_{n}) \in \Omega_{abs}(G_{\gamma_{l,j}})$, and {\footnotesize
\begin{align} \notag &
 \widehat {G_{\gamma_{l,j}}}\bigl(\lambda_{1},...,\lambda_{n}\bigr)=\lambda_{j_{1}}^{\gamma_{j_{1}}-1}\cdot ... \cdot \lambda_{j_{l}}^{\gamma_{j_{l}}-1}
\\\label{akjot}& \times \hat{G}\Bigl(\lambda_{1},...,\lambda_{j_{1}-1},\lambda_{j_{1}}^{\gamma_{j_{1}}},\lambda_{j_{1}+1},...,\lambda_{j_{2}-1},\lambda_{j_{2}}^{\gamma_{j_{2}}},\lambda_{j_{2}+1},...,\lambda_{j_{l}-1}, \lambda_{j_{l}}^{\gamma_{j_{l}}},\lambda_{j_{l}+1},...,\lambda_{n} \Bigr).
\end{align}}
Since we have assumed that $
\lim_{k\rightarrow +\infty}\Re \lambda_{k}^{j_{s}}=0,$ $ 1\leq s\leq l,$ \eqref{profs}-\eqref{akjot} imply the existence of a positive integer $k_{1}\in {\mathbb N}$ such that, for every $k\geq k_{1},$ we have $
\widehat {G_{\gamma_{l,j}}}(\lambda_{k}^{1},...,\lambda_{k}^{n})=0.$ Therefore, $G_{\gamma_{l,j}}=0$ a.e, so that \eqref{akjot} and the uniqueness theorem for Laplace transform imply  
$G\equiv 0 ,$ and therefore, $f=0$ a.e., as required.
\end{proof}

The following corollary of Theorem \ref{ressuba} and Theorem \ref{ibeer}(iii) seems to be new even in  the one-dimensional setting:

\begin{cor}\label{ojha}
Suppose that $(\lambda_{k})_{k\in {\mathbb N}}$ is a uniqueness sequence for Laplace transform, $\Re \lambda_{k}>0$ for all $k\in {\mathbb N},$ and $\gamma \in (0,1).$ If $\lim_{k\rightarrow +\infty} \Re \lambda_{k}=+\infty,$ then $(\lambda_{k}^{\gamma})_{k\in {\mathbb N}}$ is a uniqueness sequence for Laplace transform and 
\begin{align*} 
\sum_{k=1}^{+\infty}\Biggl[ 1-\Biggl| \frac{  \lambda_{k}^{\gamma}-1 }{  \lambda_{k}^{\gamma}+1 }\Biggr|\Biggr]=+\infty.
\end{align*}
\end{cor}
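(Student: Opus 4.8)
The plan is to separate the two assertions and handle them in sequence. First I would show that $(\lambda_{k}^{\gamma})_{k\in {\mathbb N}}$ is a uniqueness sequence for the Laplace transform by the subordination argument already used in the proof of Theorem \ref{ressuba}, specialized to $n=1,$ $l=1,$ $D_{1,1}=\{1\}$ and $\gamma_{1}=\gamma$; once this is in hand, the divergence of the Blaschke-type sum will fall out of Theorem \ref{ibeer}(iii), \emph{provided} its hypotheses are checked for $(\lambda_{k}^{\gamma})_{k\in {\mathbb N}}$. The crucial preliminary observation is purely angular: since $\Re \lambda_{k}>0$ we have $|\arg \lambda_{k}|<\pi/2,$ hence $|\arg(\lambda_{k}^{\gamma})|=\gamma|\arg \lambda_{k}|\leq \gamma\pi/2<\pi/2,$ so one may take $\theta=\gamma\pi/2.$ Moreover $\Re(\lambda_{k}^{\gamma})=|\lambda_{k}|^{\gamma}\cos(\gamma\arg \lambda_{k})\geq (\Re \lambda_{k})^{\gamma}\cos(\gamma\pi/2)\to +\infty,$ which simultaneously yields a uniform lower bound $\Re(\lambda_{k}^{\gamma})\geq \delta>0$ (the infimum of a positive sequence tending to $+\infty$) and the absence of finite accumulation points. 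Thus $(\lambda_{k}^{\gamma})_{k\in {\mathbb N}}$ meets every hypothesis of Theorem \ref{ibeer}(iii), so its being a uniqueness sequence is \emph{equivalent} to the claimed divergence, and it suffices to establish one of the two.

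To prove uniqueness I would take $f\in L_{loc}^{1}([0,+\infty))$ with $\mathrm{abs}(f)<\Re(\lambda_{k}^{\gamma})$ and $\hat{f}(\lambda_{k}^{\gamma})=0$ for all $k,$ set $G(t):=\int_{0}^{t}f(s)\,ds$ (so that $G$ is continuous, $|G(t)|\leq Me^{\omega t}$ with $\omega:=\max(\mathrm{abs}(f),0),$ and $\hat{G}(\lambda_{k}^{\gamma})=0$), and form the subordinated function $G_{\gamma}(t):=\int_{0}^{+\infty}\Phi_{\gamma}(s)\,G(st^{\gamma})\,ds.$ By \cite[Theorem 2.2]{new} one has $\widehat{G_{\gamma}}(\lambda)=\lambda^{\gamma-1}\hat{G}(\lambda^{\gamma})$ on the half-plane $\Re \lambda>\omega^{1/\gamma}.$ The decisive use of the hypothesis $\lim_{k\to +\infty}\Re \lambda_{k}=+\infty$ occurs here: it furnishes a $k_{1}$ with $\Re \lambda_{k}>\omega^{1/\gamma}$ for all $k\geq k_{1},$ so the identity may be evaluated at $\lambda_{k},$ giving $\widehat{G_{\gamma}}(\lambda_{k})=\lambda_{k}^{\gamma-1}\hat{G}(\lambda_{k}^{\gamma})=0$ for $k\geq k_{1}.$ Since dropping finitely many terms preserves the uniqueness property, $(\lambda_{k})_{k\geq k_{1}}$ is still a uniqueness sequence, and as $\mathrm{abs}(G_{\gamma})\leq \omega^{1/\gamma}<\Re \lambda_{k}$ for $k\geq k_{1}$ we conclude $G_{\gamma}=0$ a.e. Finally, $\widehat{G_{\gamma}}\equiv 0$ together with the identity forces $\hat{G}(\lambda^{\gamma})=0$ for all real $\lambda>\omega^{1/\gamma},$ i.e. $\hat{G}$ vanishes on the real ray $(\omega,+\infty);$ holomorphy of $\hat{G}$ and the one-dimensional uniqueness theorem then give $G\equiv 0$ and hence $f=0$ a.e.

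The main obstacle, and the point demanding genuine care, is that this is \emph{not} the verbatim specialization of Theorem \ref{ressuba}: there the subordinated coordinates satisfy $\Re \lambda_{k}^{j_{s}}\to 0,$ whereas here it is precisely $\Re \lambda_{k}\to +\infty$ that places $\lambda_{k}$ in the region of absolute convergence $\{\Re \lambda>\omega^{1/\gamma}\}$ on which the subordination identity of \cite[Theorem 2.2]{new} is valid. The hypothesis therefore enters in exactly the slot where the proof of Theorem \ref{ressuba} invokes its own limiting condition, and one must argue this placement explicitly rather than cite \ref{ressuba} as a black box. Two further routine but non-empty checks are the Laplace transformability of $G_{\gamma}$ with the correct abscissa (so that $G_{\gamma}=0$ a.e. may legitimately be deduced from the uniqueness of $(\lambda_{k})_{k\geq k_{1}}$) and the inverse-subordination passage $\widehat{G_{\gamma}}\equiv 0\Rightarrow \hat{G}\equiv 0,$ for which the vanishing of $\hat{G}$ on $(\omega,+\infty)$ and the identity theorem suffice. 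Once uniqueness of $(\lambda_{k}^{\gamma})_{k\in {\mathbb N}}$ is secured, the angular bound $\theta=\gamma\pi/2$ renders Theorem \ref{ibeer}(iii) directly applicable and delivers the divergence of $\sum_{k}[1-|(\lambda_{k}^{\gamma}-1)/(\lambda_{k}^{\gamma}+1)|];$ this is the genuinely new content, since the original sequence $(\lambda_{k})_{k\in {\mathbb N}}$ need carry no angular or Blaschke-type structure whatsoever.
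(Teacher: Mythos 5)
Your proof is correct and follows exactly the route the paper intends: Corollary \ref{ojha} is obtained there by specializing Theorem \ref{ressuba} to $n=l=1$, $\gamma_{1}=\gamma$, and then applying Theorem \ref{ibeer}(iii) to the sequence $(\lambda_{k}^{\gamma})_{k\in {\mathbb N}}$, which is precisely your two-step plan. Your refusal to cite Theorem \ref{ressuba} as a black box is moreover well judged, not pedantic: as printed, that theorem assumes $\lim_{k\rightarrow +\infty}\Re \lambda_{k}^{j_{s}}=0$, whereas its proof uses this assumption only to place $(\lambda_{k}^{1},\dots,\lambda_{k}^{n})$, for all large $k$, inside the half-planes $\Re \lambda_{j_{s}}>\omega_{j_{s}}^{1/\gamma_{j_{s}}}$ on which the subordination identity \eqref{akjot} holds; since $\omega_{j_{s}}=\max(\omega_{j_{s}}',0)$ may be strictly positive, it is the condition $\Re \lambda_{k}^{j_{s}}\rightarrow +\infty$ (the one appearing in Corollary \ref{ojha} and in your argument), not $\Re \lambda_{k}^{j_{s}}\rightarrow 0$, that actually makes this step work, so your explicit one-dimensional reworking is exactly what a rigorous citation requires. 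The remaining checks in your write-up --- $\widehat{G_{\gamma}}(\lambda)=\lambda^{\gamma-1}\hat{G}(\lambda^{\gamma})$ on $\Re \lambda>\omega^{1/\gamma}$, the use of the fact that dropping finitely many terms preserves the uniqueness property, the identity-theorem passage from $\widehat{G_{\gamma}}\equiv 0$ to $\hat{G}\equiv 0$, and the verification of the hypotheses of Theorem \ref{ibeer}(iii) via $|\arg (\lambda_{k}^{\gamma})|\leq \gamma \pi /2<\pi /2$ and $\Re (\lambda_{k}^{\gamma})\geq (\Re \lambda_{k})^{\gamma}\cos (\gamma \pi /2)\rightarrow +\infty$ --- are all complete and match what the paper's proof of Theorem \ref{ressuba} does in the multidimensional setting.
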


We would like to make the following observation in connection with Corollary \ref{ojha}:

\begin{rem}\label{apsol}
It is worth noting that Corollary \ref{ojha} cannot be deduced from Theorem \ref{ibeer}(iii) and the implication
\begin{align}\label{naiz}
\sum_{k=1}^{+\infty}\Biggl[ 1-\Biggl| \frac{  \lambda_{k}-1 }{  \lambda_{k}+1 }\Biggr|\Biggr]=+\infty \ \ \Rightarrow \ \ 
\sum_{k=1}^{+\infty}\Biggl[ 1-\Biggl| \frac{  \lambda_{k}^{\gamma}-1 }{  \lambda_{k}^{\gamma}+1 }\Biggr|\Biggr]=+\infty,
\end{align}
since the requirements of Corollary \ref{ojha} do not imply the equality \eqref{ret}, in general. This approach can be used only in the case that there exists a number $\theta \in (0,\pi/2)$ such that $|\arg(\lambda_{k})|\leq \theta,$ $k\in {\mathbb N}.$

We want also to note that the implication \eqref{naiz} is always true if $\Re \lambda_{k}>0,$ $k\in {\mathbb N};$ in actual fact, we have 
\begin{align}\label{subp}
\Biggl| \frac{  \lambda ^{\gamma}-1 }{  \lambda ^{\gamma}+1 }\Biggr| \leq \Biggl| \frac{  \lambda  -1 }{  \lambda  +1 }\Biggr| ,\quad \Re \lambda>0,
\end{align}
with the equality if and only if $\lambda=1.$ In order to prove \eqref{subp}, let us first observe that this inequality holds for positive real numbers since $(\lambda  -1)/(\lambda  +1)=1-2/(\lambda  +1),$ $\lambda>0,$ as well as that the equality in this case holds only for $\lambda=1.$ Suppose now that $\lambda =re^{i\varphi},$ where $r>0$ and $\varphi \in (-\pi/2,\pi/2) \setminus \{0\}.$ Then \eqref{subp} is equivalent with the inequality
$$
\bigl| \lambda -1\bigr| \cdot \bigl| \lambda^{\gamma}+1 \bigr| \geq \bigl| \lambda +1\bigr| \cdot \bigl| \lambda^{\gamma}-1 \bigr|.
$$
After a tedious computation, we obtain that \eqref{subp} is equivalent with the inequality
\begin{align}\label{ocv}
r^{1+\gamma}\cos (\gamma \varphi)+r^{\gamma-1}\cos (\gamma \varphi) \geq r^{2\gamma}\cos \varphi +\cos \varphi.
\end{align}
Since $\cos (\gamma \varphi)>\cos ( \varphi)>0,$ it suffices to show that 
$
r^{1+\gamma} +r^{\gamma-1} \geq r^{2\gamma}  +1, 
$ i.e., $
r^{2+\gamma} +r^{\gamma } \geq r^{2\gamma+1}  +r.
$ But, 
\begin{align*}
r^{2+\gamma} +r^{\gamma } - r^{2\gamma+1}  -r=r\Bigl( r^{1+\gamma}-1\Bigr) \cdot \Bigl(1-r^{\gamma-1}\Bigr) \geq 0,
\end{align*}
which follows by considering separately the cases $0< r\leq 1$ and $r>1.$
\end{rem}

In the existing literature, it has still not been clarified any
inversion formula for Laplace transform that holds for arbitrary uniqueness sequences, even in the one-dimensional setting. We close the paper with the observation that 
the numerical inversion of multidimensional Laplace transform has been considered in many research articles (cf. the list of references quoted in \cite{mvlt}) as well as that the Post-Widder theorem can be simply clarified for the multidimensional vector-valued Laplace transform, which is no longer case for the Phragm\'en–Doetsch inversion (cf. \cite[Theorem 1.4.4]{FKP}) and the 
Phragm\'en-Mikusi\'nski inversion (cf.
\cite[Corollary 1.5]{b3}).

\end{document}